\documentclass[a4paper, 11pt, reqno]{amsart}

\usepackage{setspace}\usepackage{mathrsfs}
\usepackage[utf8]{inputenc}
\usepackage{graphicx}\usepackage{constants}
\usepackage{amsmath}
\usepackage{amssymb,amsthm}
\usepackage{cases}
\usepackage{esint}
\usepackage[a4paper,margin=3cm]{geometry}
\usepackage[citecolor=blue, linkcolor=blue,colorlinks]{hyperref}

\graphicspath{ {images/} }
\newtheorem{theorem}{Theorem}
\newtheorem{lemma}{Lemma}

\theoremstyle{definition}
\newtheorem{rk}{Remark}

\def\R{\mathbb{R}}

\def\half{{\textstyle\frac{1}{2}}}

\def\a{\alpha}
\def\e{\epsilon} 

\def\p{\partial}

\font\smathbold=msbm7\font\ssmathbold=msbm7 at 5.5pt
\def\sR{{\hbox{\smathbold\char82}}}\def\ssR{{\hbox{\ssmathbold\char82}}}

\def\ni{\noindent}

\def\ba{\begin{aligned}}
\def\ea{\end{aligned}}

 \def\d{\delta}

\def\be{\begin{equation}}\def\ee{\end{equation}}\def\bc{\begin{cases}}\def\ec{\end{cases}}
\def\inj{{\rm {inj}}}\def\N{{\mathbb N}}
\def\qed{\rightline{\setlength{\fboxsep}{0pt}\setlength{\fboxrule}{0.2pt}\fbox{\rule[0pt]{0pt}{1.3ex}\rule[0pt]{1.3ex}{0pt}}}}

\def\ric{{\rm{Ric } }}

\def\llceil{\left\lceil}\def\rrceil{\right\rceil}
\newcommand\fH[1]{\sbox0{#1}\dimen0=\ht0 \advance\dimen0 -1ex
  \sbox2{\'{}}\sbox2{\raise\dimen0\box2}%
  {\ooalign{\hidewidth\kern.1em\copy2\kern-.5\wd2\box2\hidewidth\cr\box0\crcr}}}

\font\twelvemi=cmmi12 at 12pt\font\elevenmi=cmmi11 at 9 pt\font\fivemi=cmmi5 at 6 pt\font\twelvemimi=cmmi12 at 12pt
\renewcommand{\chi}{\raisebox{.13\baselineskip}{\hbox{\twelvemi\char31}}}
\newcommand{\gam}{{\raisebox{.08\baselineskip}{\hbox{\twelvemi\char13}}}}
\newcommand{\sgam}{{\raisebox{.0\baselineskip}{\hbox{\elevenmi\char13}}}}
\newcommand{\sm}{{\hbox{\twelvemimi\char27}}_{\!\hbox{\fivemi\char 77}}}

\renewcommand{\gamma}{\gam}
\def\vol{{\rm {Vol }}}

\vskip 0.1in\def\grn_#1{{G_#1^{\sR^{\hskip-.007in n}}}}\def\sgrn_#1{{G_#1^{\ssR^{\hskip-.007in n}}}}
\def\per{{\rm {Per }}}

\def\nn{{\frac{n-1}{n}}}\def\nnn{{\frac n{n-1}}}\def\NN{{\mathcal N}}
\def\PS{P\'olya-Szeg\H o }\def\gp{{g_{p}^{}}}
\def\vt{\tilde v}\def\rh{r_{\hskip-.25em\scriptscriptstyle H} }
\def\diam{{\rm diam }}\def\dh{\delta_{\hskip-.1em\scriptscriptstyle H} }
\def\im{I_{\hskip-.1em\scriptscriptstyle M} }\def\ipa{I_p^\a}
\def\nk{{n/k}}\def\nnk{{\frac{n}{n-k}}}\def\knk{{\frac{k}{n-k}}}
\title[Sharp Sobolev inequality] {Sharp Sobolev and Moser-Trudinger inequalities on noncompact Riemannian manifolds with Ricci curvature bounded below}
\author[ C. Morpurgo,  L. Qin] { Carlo Morpurgo,   Liuyu Qin}
 \thanks{The third author was supported by the National Natural Science Foundation of China (12201197)}
\begin{document}

\begin{abstract} We establish Sobolev and Moser-Trudinger inequalities with best constants on noncompact Riemannan manifolds with Ricci curvature bounded below, and positive injectivity radius.

\end{abstract}
\numberwithin{equation}{section}
\maketitle
\tableofcontents


\section{Introduction}\label{s1}

Given   a  smooth, complete $n-$dimensional  Riemannian manifold $(M,g)$, with $n\ge2$, let $\a>0 $ and  
\be 1\le p<n,\qquad q=\frac {np}{n-p}.\label {par}\ee 
We say that the (sharp) $\ipa$ Sobolev inequality holds if there exists $B>0$ such that 
\be \|u\|_q^\a\le K(n,p)^\a\|\nabla u\|_p^\a+B\|u\|_p^\a,\qquad u\in W^{1,p}(M),\tag{$I_p^\alpha$}\label{ipa}\ee
	 where $K(n,p)$ denotes the best  constant in the Sobolev embedding $W^{1,p}(\R^n)\hookrightarrow L^q(\R^n)$, given by the well-known inequality  due to Aubin \cite{aubin} and Talenti \cite {talenti}. The  constant $K(n,p)^\a$ in \eqref{ipa} is optimal, that is, it cannot be replaced by a smaller constant. For a proof of this  general fact, valid on any Riemannian manifold, see Proposition 4.2 in \cite{hebey-courant} (where it was proved for $\a=1$, but the proof also works for any $\a>0$). It's easy to check that if $I_p^\a$ holds for some $\alpha>0,$ then $I_p^\beta$ holds if $0<\beta<\alpha.$ 

In the recent paper \cite{mnq} it is proven that if $|\ric|\le K$ and $\inj(M)>0$ then $I_p^1$ holds. Previous results for $\alpha=1$  were known  for $1<p<n$, under stronger assumptions on the curvature tensor \cite{aubin-li}, and for $p=1$ when $M$ has constant sectional curvature \cite{aubin-jdg}.  For a more in-depth discussion on the validity of \eqref{ipa},  we refer the reader to \cite{mnq} and references therein, with particular emphasis on \cite{hebey-courant} and \cite{druet-hebey}, where Hebey's  celebrated AB program is discussed extensively.

To our knowledge, there are no results in the literature  concerning the validity of \eqref{ipa} under the weaker assumptions $\ric\ge K$, $\inj(M)>0$.   The first main result of this paper is the following:

\begin{theorem}\label{main1} On a complete, smooth Riemannian $n-$dimensional  manifold $(M,g)$, suppose that 
\be \ric \ge K,\qquad \inj(M)>0,\label{ric}\ee
for some $K\in\R.$
 Then, the $I_p^\a$ Sobolev inequality holds, for any $\a\in (0,1)$ and $p\in[1,n)$.

\end{theorem}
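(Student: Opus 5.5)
The plan is to combine the Anderson--Cheeger harmonic coordinate theory with the sharp Euclidean inequality of Aubin--Talenti, localized at a scale adapted to the function $u$. The exponent $\a<1$ will absorb the curvature error.

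\textbf{Step 1: harmonic charts.} Under \eqref{ric}, for every $\gamma\in(0,1)$ there exist $r_0>0$ and $C>0$ with the following property. Every point $x\in M$ has a harmonic chart on $B(x,r_0)$ in which
\[
|g_{ij}-\delta_{ij}|\le C\,d(x,\cdot)^{\gamma}.
\]
In addition, Bishop--Gromov together with $\inj(M)>0$ gives two-sided volume bounds for balls of radius $\le r_0$, uniform in $x$.

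Pull $u$ back to $\R^n$ inside such a chart, apply the sharp Euclidean inequality, and compare volume forms and gradient norms. For $u$ supported in a ball of radius $r\le r_0$ this yields
\[
\|u\|_q^p\le \bigl(1+Cr^{\gamma}\bigr)K(n,p)^p\|\nabla u\|_p^p .
\]

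\textbf{Step 2: globalization.} Choose a uniformly locally finite covering by balls $B(x_i,r)$, with multiplicity bounded independently of $r$ by the volume bounds. Take an associated partition of unity $\eta_i^p$ with $|\nabla\eta_i|\le C/r$. Apply Step 1 to each $\eta_i u$, and use $q/p>1$ in the form $\|u\|_q^p=\|\sum_i\eta_i^pu^p\|_{q/p}\le\sum_i\|\eta_iu\|_q^p$. Then expand $|\nabla(\eta_iu)|^p$ by the elementary inequality $|a+b|^p\le|a|^p+C(|a|^{p-1}|b|+|b|^p)$ and apply Hölder. The goal of this step is an estimate of the form
\[
\|u\|_q^p\le K(n,p)^p\|\nabla u\|_p^p + C r^{\gamma}\|\nabla u\|_p^p + Cr^{-1}\|\nabla u\|_p^{p-1}\|u\|_p+Cr^{-p}\|u\|_p^p .
\]
The scale $r$ is then optimized in terms of the ratio $\|u\|_p/\|\nabla u\|_p$, taking $r\approx(\|u\|_p/\|\nabla u\|_p)^{\theta}$ and $r=r_0$ if this exceeds $r_0$. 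This gives
\[
\|u\|_q^p\le K(n,p)^p\|\nabla u\|_p^p+X,\qquad X\le C\|\nabla u\|_p^{p-\eta}\|u\|_p^{\eta}+C\|u\|_p^p,
\]
with an exponent $\eta=\eta(\gamma,p)>0$.

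\textbf{Step 3: passing to the power $\a$.} Normalize $\|u\|_q=1$. If $K(n,p)\|\nabla u\|_p\ge1$, then \eqref{ipa} is trivial, so assume $\|\nabla u\|_p<1/K(n,p)$.

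If $X\ge\half$, the bound on $X$ and the bound on $\|\nabla u\|_p$ force $\|u\|_p\ge c>0$. In that case \eqref{ipa} holds with $B\ge c^{-\a}$.

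Otherwise $A:=K(n,p)^p\|\nabla u\|_p^p\ge\half$. The mean value theorem for $t\mapsto t^{\a/p}$ on $[\half,\infty)$ then gives
\[
1\le A^{\a/p}+C X\le K(n,p)^\a\|\nabla u\|_p^\a + C\|u\|_p^{\eta}+C\|u\|_p^p .
\]
This is dominated by $K(n,p)^\a\|\nabla u\|_p^\a+B\|u\|_p^\a$, since $\|u\|_p$ is bounded in this regime, provided that $\eta\ge\a$.

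\textbf{Main obstacle.} The crux is making $\eta\ge\a$ for every $\a<1$. This is precisely where $\a<1$, rather than $\a=1$, enters, and it requires sharp bookkeeping in Step 2. The Hölder exponent $\gamma$ can be taken arbitrarily close to $1$, but the crude cross term $r^{-1}\|\nabla u\|^{p-1}\|u\|$ limits $\eta$. I would first try to improve it by using partitions whose gradients are concentrated on thin annuli, so that the cross term carries an extra factor. Alternatively, I would replace the crude optimization by a contradiction/concentration argument in the style of Druet--Hebey. In that approach, one takes a sequence $u_m$ with $\|u_m\|_q=1$ violating \eqref{ipa} with $B=m$. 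Such a sequence has $\|u_m\|_p\to0$ and must concentrate at scales $r_m\to0$, where the metric is $r_m^{\gamma}$-close to Euclidean. The gain $r_m^{\gamma}$ would then be compared against the deficit $m\|u_m\|_p^\a$, with the sublinearity of $t^{\a}$ for $\a<1$ making this deficit large relative to the error.
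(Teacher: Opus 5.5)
\textbf{There is a genuine gap.} Your Steps 1--3 are correct as far as they go, but they prove the statement only for $\alpha<1/2$.

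Optimizing the Step 2 bound in $r$ means balancing $r^{\gamma}\|\nabla u\|_p^p$ against the cutting cost $r^{-1}\|\nabla u\|_p^{p-1}\|u\|_p$ (or against $r^{-1}\|u\|_1$ when $p=1$). The optimum is $r\approx(\|u\|_p/\|\nabla u\|_p)^{1/(1+\gamma)}$, which gives $\eta=\gamma/(1+\gamma)$. This is $<1/2$ for every $\gamma<1$, so the condition $\eta\ge\alpha$ you isolate fails for all $\alpha\in[1/2,1)$.

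This is structural, not bookkeeping. A near-extremal $u$ with $\|u\|_q=1$ concentrates at a scale where the true metric error is of order $\|u\|_p^{\gamma}$. A partition at one uniform scale must take $r\gg\|u\|_p$ to keep the cutting cost small, and it then pays $r^{\gamma}$. Thin annuli do not help: averaging the cutting radius over $[r/2,r]$ already gives a cost of order $\|u\|_p/r$, and narrowing the annuli only increases $|\nabla\eta_i|$.

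Your Druet--Hebey alternative is the right kind of idea, but it is only named. The whole difficulty is to show that, for the concentrating family, the metric error is bounded by $C\|u_m\|_p^{\alpha}$, not merely that the concentration scale tends to $0$. In the paper this is Lemma~\ref{lemma}. From the weak decay bound \eqref{decay} one gets $\int|\xi|^{\delta p}v_p^q\,d\xi\le 2(8R_0)^{\delta p}\|v_p\|_{p,g_p}^{\delta p}$, together with the weighted gradient bound \eqref{dp2}. The errors then come out as $\mu_p^{\alpha}\|v_p\|_{p,g_p}^{\alpha}=\|u_p\|_p^{\alpha}$ and can be played against $B\|u_p\|_p^{\alpha}$. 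You give no indication of how to obtain any decay or weighted moment control for your sequence $u_m$. This is delicate in charts with only $C^{0,\gamma}$ control, where the strong decay bound used in the $C^1$ setting is not available.

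\textbf{How the paper avoids your obstruction.} It never localizes functions with a partition of unity.
\begin{enumerate}
\item It runs the blow-up only along $p\to1$, to get a local $I_1^\alpha$ inequality (Theorem~\ref{druet}).
\item It converts this into the profile expansion $I_0(v)\ge 1-Bv^{\alpha/n}$ (Theorem~\ref{expansion}) by testing on isoperimetric regions. By \eqref{diam-vol} these have diameter at most $C_0v^{1/n}$, so concentration comes for free.
\item It passes to arbitrary $u$ and $p$ by symmetrization. The weighted P\'olya--Szeg\H{o} inequality \eqref{ps} and the representation formula \eqref{repformula1} give the small-volume $I_p^a$ for every $a<\alpha$ (Theorem~\ref{smallprofile-sobolev}). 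Since the rearrangement sees only the distribution function, spread-out functions cost nothing.
\item It globalizes by truncating at the level $u^*(\tau)$ (Theorem~\ref{loc-glob1}), not by a partition of unity.
\end{enumerate}

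\textbf{A natural repair of your plan.} Apply your Step 1 metric comparison to an isoperimetric region of volume $v$, which lies in a ball of radius $C_0v^{1/n}$ where $|g_{ij}-\delta_{ij}|\le Cv^{\gamma/n}$ in a harmonic chart. In the limit case, apply it instead to the approximating sets $\Omega_j$. Together with the existence dichotomy and \eqref{diam-vol}, this appears to give $I_0(v)\ge 1-Cv^{\gamma/n}$ directly. You would then replace your Steps 2--3 by this profile-plus-symmetrization route.
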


The case $p=n$ is  the so-called borderline case, and the corresponding Sobolev embedding is given in terms of a Moser-Trudinger inequality on $W^{1,n}(M)$. In this paper we will also treat the borderline Sobolev space  $W^{2,\frac n2}(M)$.

For $k\in\N$ define
\be\nabla^k u=\bc \nabla(-\Delta)^{\frac{k-1}2} u & {\text{ if }} k\; {\text{is odd}}\\ (-\Delta)^{\frac k2} u & {\text{ if }} k\; {\text{is even}},\ec\ee
and for any $N\in\N\cup\{0\}$ define the regularized exponential function as
\be \exp_N(t)=e^t-\sum_{j=0}^N \frac { t^j} {j!}.\ee
We will say that the sharp Moser-Trudinger $MT_k$ inequality holds  ($1\le k<n$), if for any $\kappa>0$ there is $C>0$ such that for all $u\in W^{k,\frac n k}(M)$ with 
\be \kappa\|u\|_{n/k}^{n/k}+\|\nabla^k u\|_{n/k}^{n/k}\le1\label{ruf}\ee
we have 
\be  \int_M \exp_{\llceil\frac{n-k}{k}-1\rrceil}{\left(\gam_{n,k}|u|^{\frac{n}{n-k}}\right)}d\mu\le C,\tag{$MT_k$} \label{MT}\ee 
where $\gam_{n,k}$ denotes the sharp constant in the classical Moser-Trudinger inequality on $\R^n$.
In particular, we have
\be \gam_{n,1}=B_n^{\frac1{n-1}} n^{\frac n{n-1}},\qquad \gam_{n,2}=B_n^{\frac2{n-2}} \big(n(n-2)\big)^{\frac n{n-2}},\label{constants}\ee
where $B_n$ denotes the volume of the unit ball on $\R^n$. The constant $\gam_{n,k}$ in \eqref{MT} is optimal, that is, it cannot be replaced by a larger number. This is a standard fact, valid on any Riemannian manifold, and can be proved using the usual ``Adams-Moser sequence'', supported in a small neighborhood of a fixed point (see e.g. \cite{f} for the construction on a compact manifold, or \cite{y}  proof of Theorem 2.3, or \cite{fmq} proof of sharpness statement of Theorem 2).

It is also  easy to see (see \eqref{expreg} below) that \eqref{MT} under \eqref{ruf}  is equivalent to the inequality
\be\int_{|u|\ge 1} e^{\sgam_{n,k}|u|^{\frac{n}{n-k}}}d\mu\le C.\ee

On $\R^n$, inequality \eqref{MT} under \eqref{ruf} was derived   in \cite{ruf},\cite{lr}, for $k=1$,  in \cite{ll1} for $k=2$ and in \cite{fm} for any $k$.   When $k=1$ the same result was obtained in \cite{fmq} when the sectional curvature satisfies $-b^2\le\ K\le 0$, unless  $n=2,3,4$, in which case it also holds when $K\le 0$ (see \cite{kr}). In \cite{fmq} inequality \eqref{MT} was proved (any $k$) on noncompact manifolds with $\ric\ge0$ and Euclidean volume growth, under additional boundedness conditions on the full curvature tensor.  For further comments and related results see \cite{fmq}, \cite{fmq-ann}, \cite{mq} and references therein.

\smallskip\medskip
The second main result of this paper is the following:

\begin{theorem}\label{main2} On a complete, smooth Riemannian $n-$dimensional  manifold $(M,g)$, suppose that 
\be \ric \ge K,\qquad \inj(M)>0,\label{ric}\ee
for some $K\in\R.$
 Then, the sharp $MT_k$ Moser-Trudinger inequality holds, for $k=1,2$.

\end{theorem}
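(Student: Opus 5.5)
We prove Theorem \ref{main2} by reducing it to a sharp Adams-type inequality for potentials on measure spaces, applied to suitable Green-function representations. The argument has four steps.

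\textbf{Step 1: Potential representation.} For $k=2$ write $u=(-\Delta+\lambda)^{-1}f$ with $f=(-\Delta+\lambda)u$ and $\lambda>0$ large. For $k=1$ write $u$ through the kernel of $\nabla^*(-\Delta+\lambda)^{-1}$ acting on the vector field $\nabla u$, plus a lower-order term. Thus $u=T f$ with $\|f\|_{n/k}$ controlled by the left side of \eqref{ruf}. The term $\lambda\|u\|_{n/k}$ is absorbed using $\kappa$: choose $\lambda$ depending on $\kappa$ and on $\epsilon$, the tolerance allowed below.

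\textbf{Step 2: Kernel estimates.} The needed estimates are:
\begin{itemize}
\item near the diagonal, $|G_\lambda(x,y)|\le \grn_{}(d(x,y))\,(1+C d^{\,\delta})$, and similarly for $|\nabla G_\lambda|$ with the sharp Euclidean constant;
\item exponential decay $|G_\lambda|\le Ce^{-c\,d(x,y)}$ away from the diagonal.
\end{itemize}
Under \eqref{ric}, Li--Yau/Cheng–type heat kernel bounds together with volume doubling at small scales give Gaussian upper bounds $p_t(x,y)\le C\,\mu(B(x,\sqrt t))^{-1}e^{-d^2/5t}e^{|K|t}$. Positive injectivity radius and $\ric\ge K$ (Croke's inequality) give the lower volume bound $\mu(B(x,r))\ge c r^n$ for $r\le 1$; Bishop–Gromov gives the upper bound. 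Integrating in $t$ against $e^{-\lambda t}$ with $\lambda>|K|$ yields the decay estimate.

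\textbf{Step 3: Sharp local asymptotics (main obstacle).} The sharp leading coefficient near the diagonal is the hard part. A full curvature bound would give harmonic coordinates with $C^{1,\alpha}$ control. Here we only have $\ric\ge K$ and $\inj>0$. The first thing to try is Anderson–Cheeger $C^\alpha$ harmonic coordinates, which exist under exactly these hypotheses, uniformly on balls of a fixed radius $r_0$. In these coordinates $g_{ij}$ is $\epsilon$-close to $\delta_{ij}$ in $C^\alpha$. Comparing $G_\lambda$ with the Euclidean Green function via a perturbation of divergence-form operators with Hölder coefficients should give $G_\lambda(x,y)\le(1+\epsilon)\grn_{}(d(x,y))+C$, and similarly for the gradient. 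For $k=2$ this is a bound on $G_\lambda$ itself. For $k=1$ it is the gradient bound, which is where the $C^\alpha$ regularity enters through Schauder/Campanato estimates. A multiplicative $(1+\epsilon)$ error is not enough for a sharp constant. The fix is to localize: choose $r_0$ so small that $\epsilon\to0$, and use $\lambda$ large so that the kernel essentially lives at scale $r_0$. We then aim for the $O(1)$-additive form: a multiplicative constant tending to $1$ as $d\to0$, which translates into $|T(x,y)|\le \grn_{}(d)(1+Cd^\alpha)$. This is what the Adams–O'Neil machinery tolerates.

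\textbf{Step 4: Sharp inequality.} With these estimates the nonincreasing rearrangements satisfy
\[
T_1^*(t)\le A\,t^{-(n-k)/n}\bigl(1+O(t^{\alpha/n})\bigr)\ \text{ for } t\le1,
\qquad
T^*(t)\le Ce^{-ct^{1/n}}\ \text{ for large } t.
\]
Here $A$ is the Euclidean constant, and the volume bounds convert distance to measure with a $1+O(r^2)$ error. We then apply the abstract Adams inequality with regularized exponential on measure spaces (as in \cite{fm}, \cite{fmq}), which requires $T\in L^{(n/(n-k))}$-type tails; the exponential decay supplies this. This gives \eqref{MT} with $\exp_{\lceil (n-k)/k-1\rceil}$ and the sharp $\gam_{n,k}$. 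The regularized exponential handles the infinite volume, since $\mu(\{|u|\ge1\})\le\|u\|_{n/k}^{n/k}/1\le\kappa^{-1}$.
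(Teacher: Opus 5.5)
Your route (a Green's-function representation fed into an abstract Adams theorem) is genuinely different from the paper's. As written, however, it breaks down at Step~1 for $k=2$. This is exactly where the Ruf normalization \eqref{ruf} must be turned into the normalization $\|f\|_{n/2}\le 1$ that a sharp Adams inequality requires.

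With $f=(-\Delta+\lambda)u$ and $s=\|u\|_{n/2}$, all you can say is $\|f\|_{n/2}\le(1-\kappa s^{n/2})^{2/n}+\lambda s=1+\lambda s-O(s^{n/2})$, which exceeds $1$ for small $s$. The excess is of first order in $s$, whereas \eqref{ruf} only gains $\kappa s^{n/2}$ with $n/2>1$, and no choice of $\lambda>0$ changes this. Such a loss cannot be tolerated at the critical exponent: you would only control $\int\exp\bigl(\gamma_{n,2}(1+cs)^{-n/(n-2)}|u|^{n/(n-2)}\bigr)$. Along Adams--Moser sequences concentrating at scale $\varepsilon$, one has $s\approx(\log\frac1\varepsilon)^{-2/n}$ and $\gamma_{n,2}|u|^{n/(n-2)}\approx n\log\frac1\varepsilon$ on the concentration ball. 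So this misses the sharp integrand there by a factor $\exp\bigl(c'(\log\frac1\varepsilon)^{1-2/n}\bigr)$.

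Splitting instead $u=G_\lambda(-\Delta u)+\lambda G_\lambda u$ does not help directly. Since $G_\lambda(x,\cdot)\sim d^{2-n}$ is not in $L^{n/(n-2)}_{\rm loc}$, the remainder is not bounded in terms of $\|u\|_{n/2}$. Treating $\kappa^{2/n}u$ as a second datum produces a second kernel with the same $d^{2-n}$ singularity, which spoils the leading constant of the rearrangement.

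For $k=1$ the absorption does work. There the remainder $\lambda G_\lambda u$ is $O(\|u\|_n)$ in $L^\infty$. The convexity splitting $(a+b)^p\le a^p\theta^{1-p}+b^p(1-\theta)^{1-p}$, with $p=\frac n{n-1}$ and $\theta=\|\nabla u\|_n^n\le 1-\kappa s^n$, then costs only $O\bigl(s^p(\kappa s^n)^{1-p}\bigr)=O(1)$.

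For $k=2$ you need an additional idea. One option is to iterate the resolvent identity, $u=(G_\lambda+\lambda G_\lambda^2)(-\Delta u)+\lambda^2G_\lambda^2u$. The remainder kernel is then $O(d^{4-n})$ (logarithmic or bounded when $n=4,3$), hence in $L^{n/(n-2)}$, and you can split as above. The paper sidesteps the problem entirely by truncating at the level $u^*(\tau)$ (Theorem~\ref{loc-glob2}). After that, only $\|\Delta v\|_{n/2}$ on a set of volume at most $\tau$ enters, and $\kappa$ is used solely to bound $u^*(\tau)$.

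Two further points. First, Step~3, which carries the entire difficulty under $\ric\ge K$, is asserted rather than proved. It is plausible: compare $G_\lambda(x,\cdot)$ with the Euclidean fundamental solution $\Gamma$ in an Anderson--Cheeger chart centred at $x$. Then apply divergence-form Schauder estimates on dyadic annuli rescaled to unit size, where the $C^{\alpha}$ seminorm of the coefficients improves by a factor $r^\alpha$. This should give $\nabla_yG_\lambda(x,y)=\nabla\Gamma(\xi)+O(|\xi|^{1-n+\alpha})$. But your ``$(1+\epsilon)G+C$, then localize'' heuristic does not by itself yield the quantitative rate that the abstract theorem needs.

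Second, the tail bound $T^*(t)\le Ce^{-ct^{1/n}}$ is false when $K<0$, e.g.\ on hyperbolic space. Exponential volume growth turns exponential decay in $d$ into only a power decay $t^{-N}$, with $N$ large when $\lambda$ is large; that is still enough.

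By contrast, the paper never uses Green's functions. It proves a local sharp $L^1$ Sobolev inequality with exponent $\alpha<1$ by blow-up in $C^{0,\alpha}$ harmonic charts, using only the weak decay bound. It converts this into the profile expansion $I_0(v)\ge 1-Bv^{\alpha/n}$. It then reduces to a one-dimensional Adams inequality on $[0,\tau]$ through the generalized P\'olya--Szeg\H{o} and Talenti comparisons. The geometry enters only through $I_0$, and the same machinery also yields Theorem~\ref{main1}.
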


\smallskip
\begin{rk} \label{yang} Theorem \ref{main2} for the case $k=1$ in the so-called subcritical case (i.e. when $\gamma_{n,1}$ is replaced by any $\gamma<\gamma_{n,1}$), and under \eqref{ruf} for some $\kappa>0$, was proved by Yang \cite{y}, Theorem 2.3.
\end{rk}

\begin{rk}\label{lilu-rem}
 Theorem \ref{main2} for the case $k=1$ was stated in \cite{lilu1}, Theorem 1.3.  Adjustments on the proof have been announced in \cite{lilu2}.
\end{rk}

\bigskip
The method of proof of Theorems \ref{main1} and  \ref{main2}, follow the same scheme as the one used in \cite{mnq} (written here in a different order): 

\medskip

\ni \underline{Step 1}: establish a local $I_1^\alpha$ inequality

\smallskip

\ni \underline{Step 2}: use the local $I_1^\alpha$ inequality to establish the sharp lower bound
\be I_0(v)\ge n B_n^{\frac1n}v^{\frac {n-1}n}-Bv^{\frac{n-1+\a}n},\qquad 0<v\le v_0\label{lb}\ee

\smallskip
\ni \underline{Step 3}:  use \eqref{lb} to prove the inequalities of Theorems \ref{main1} and \ref {main2} for functions supported on open sets with small volumes

\smallskip

\ni \underline{Step 4}:  prove that if the inequalities of  Theorems \ref{main1} and \ref {main2} are valid  for small volumes  then they are valid globally.

\smallskip
               
We would like to highlight the major differences between this paper and \cite{mnq}.  First, recall that in \cite{mnq} the case $I_p^1$ was treated under the hypothesis $|\ric|\le K$ and $\inj(M)>~0$, and the Moser-Trudinger inequalities were not considered.

 Regarding Theorem \ref{main1}, one of the major difficulties we encountered in  going from $\alpha=1$ to $\alpha<1$ was in Step 1, that is establishing  the local $I_1^\a$ inequality under \eqref{ric}.  The proof given in \cite{mnq} was adapted from the one in \cite{druet-pams}, \cite{noa-imrn}, \cite{noa-arxiv} and  relied on a sophisticated analysis of a suitable blowup family  $\{v_p\}$ associated to the functional 
  \be\frac{\|\nabla u\|_p^\a+B \|u\|_p^\a}{\|u\|_q^\a}\ee
with $\alpha=1$.      Under the hytpothesis $|\ric|\le K,\; \inj(M)>0$ the blowup was constructed in  $C^1$ harmonic coordinates
(as opposed to the usual geodesic normal coordinates),  with good uniform control on the components of the metric tensor and their first partials (due to a well-known result by Anderson \cite{and}).
One key technical result was the so called ``strong pointwise bound'' on the blowup family  $v_p$. In a $ C^1$ harmonic chart centered at a blowup point, such an inequality reads as
\be v_p(\xi)\le C |\xi|^{-\frac{n-p-b}{p-1}},\qquad\quad \xi\in\R^n,\;|\xi|\ge 1\label{strong}\ee
         where $b>0$ is small enough, $C$ is independent of $p$ and $p$ is close enough to 1  (see Lemma~2 in \cite{mnq}).
         This decay bound at infinity allowed us, and the aforementioned authors, to prove various integral estimates  on $v_p^q$,  $|\nabla v_p|^p$, and their  moments. Estimate \eqref{strong} was a crucial tool also in \cite{aubin-li} (Prop. 3.1) and in several other papers.

  The proof of \eqref{strong} relies on a comparison theorem  for operators of type $u\to -\Delta_{p,g}u+a |u|^{p-2}u$, where $\Delta_{p,g}$ denotes the $p$-Laplacian in a given  metric $g$, and where $a$ is some bounded function (see \cite{aubin-li} Lemma 3.4). To apply such result, a uniform control of the first partials of the metric $g$ was needed - and possible - in the $C^1$ harmonic chart. This is the only step where uniform control of the partials was needed (see Proof of Lemma 2 and  Remark 5 in \cite {mnq}).
  
   Under $\ric\ge K$ and $\inj(M)>0$ one can only guarantee the existence of  harmonic charts with uniform $C^{0,\a}$ control on the components of the metric tensor, by results of Anderson-Cheeger \cite{ac}, so we were not able to obtain the strong decay bound \eqref{strong} in this setting. In this paper  we bypass this difficulty, and  complete the proof of Step 1, by  relying  only on a ``weak pointwise bound'', of type 
   \be  v_p(\xi)\le C |\xi|^{-\frac np +1},\qquad\quad \xi\in\R^n,\;|\xi|\ge 1\label{weak}\ee
see \eqref{decay}, Lemma \ref{lemma}, and Remark \ref{weak-strong}.

\smallskip

The proof of Step 2 is accomplished in the same way as in \cite{mnq}: given  $v>0$ if an isoperimetric region $\Omega$  of volume $v$ exists in $M$, then, assuming \eqref{ric}, its diameter is controlled by $v$ so for small enough $v$, so estimate \eqref{lb} follows  immediately, by  essentially applying $I_1^\a$ to $\chi_\Omega.$  If such a region does not exist in $M$, then it will still exists in a $C^{0,\a}$ pointed limit manifolds in the $C^{0,\a}$ topology, and the previous argument can still be carried on.
 
 \smallskip
 
 To handle Step 3, assuming \eqref{lb}, to derive $I_p^\a$ for small volumes we use a generalized form of the  P\'olya-Szeg\H o inequality \eqref{ps}, used also in \cite{mnq},  and an extension of the argument given in \cite{mnq}, proof of Theorem 3.
 To derive instead \eqref{MT} for small volumes, for $k=1$ we use \eqref{lb} to obtain a  representation formula for the rearrangement  of $u$, in terms of its gradient, combined with \eqref{ps} and a 1-dimensional Adams inequality
 (a similar argument was used by Cianchi in a different context on $\R^n$  \cite{ci}).
 
  To obtain \eqref{MT} for $k=2$ we will use instead a generalized form of Talenti's inequality, given in \eqref{estlap}, to represent the rearrangement of $u$ in terms of its Laplacian and the normalized isoperimetric profile, and then use a 1-dimensional Adams inequality. We will in fact obtain a Moser-Trudinger inequality for $W_0^{1,2}(\Omega)\cap W^{2,\frac n2}(\Omega)$ (for $\vol(\Omega)$ small enough) which include the Sobolev space with Navier boundary conditions $W_\NN^{2,\frac n2}(\Omega)=W_0^{1,\frac n2}(\Omega)\cap W^{2,\frac n2}(\Omega)$ (and hence $W_0^{2,\frac n2}(\Omega)$), when $n>3$, and are equal to such spaces when $n=3$,  if $\partial \Omega$ is smooth enough. We note that Moser-Trudinger inequalities for general $W_\NN^{k,\frac nk}(\Omega)$ on  $\R^n$ (for $\partial \Omega$ smooth enough) were first discovered by Tarsi in \cite{tarsi}, and they were also based on the original Talenti's comparison theorem. 
 
 \smallskip
 Step 4 for $I_p^\a$ requires only a small extension of the proof of Theorem 2 in \cite{mnq}, from $\alpha=1$ to $0<\alpha<1$. Regarding \eqref{MT}, Step 4 for $k=1$ will be proved by a truncation argument similar to that used in \cite{fmq-ann}  in the context of Riemannian manifolds, which was itself a variation of similar arguments used in other papers in different settings, notably \cite{bm}, \cite{ll1}, \cite{ll2}, \cite{ruf}.  For $k=2$ the truncation argument still stands, if one is working with the spaces $W_0^{1,2} (\Omega)\cap W^{2,\frac n 2}(\Omega)$. This idea was first exploited by Lam-Lu in \cite{ll1} on $\R^n$, where the authors proved \eqref{MT} for $k=2$  on $\R^n$ by   truncation over  suitable level sets, reducing  to the case of Tarsi's Moser-Trudinger inequality for  $W_\NN^{2,\frac n2}(\Omega)$.

\vskip1em


\section{Preliminaries}\label{s2}

\bigskip
Let $(M,g)$ be a complete, noncompact, $n-$dimensional Riemannian manifold, with metric tensor $g$. The geodesic distance between two points $x,y\in M$ is denoted by $d(x,y)$, and the open geodesic ball centered at $x$ and with radius $r$ will be denoted as $B(x,r)$. The manifold $(M,g)$ is equipped with the natural Riemannian measure $\mu$ which in any chart satisfies $d\mu=\sqrt{|g|} dm$, where $|g|=\det(g_{ij})$, $g=(g_{ij})$ and $dm$ is the Lebesgue measure.

The volume of a measurable set $E\subseteq M$ is defined  as $\vol(E)=\mu(E)$, and its perimeter as the total variation of $\chi_E$.

For the definitions and the basic properties of the classical Sobolev spaces $W^{k,p}$ and $W^{k,p}_0$ on Riemannian manifolds
we shall refer the reader to \cite{hebey-courant}, chapters 2 and 3. We just point out that if  $p\ge1$, then  $W^{1,p}(M)=W_0^{1,p}(M)$, the closure of $C_c^\infty(M)$ under the norm $\|u\|_p+\|\nabla u\|_p$, and if $p>1$, $\ric\ge K$, and $\inj(M)>0$, then  $W^{2,p}(M)=W_0^{2,p}(M)$, the closure of $C_c^\infty(M)$ under $\|u\|_p+\|\Delta u\|_p$ (for the latter, see \cite{mmpv}, Corollary E).

\smallskip
\ni{\bf Convention.} Throughout the paper, the dependence of various objects ($\vol, \per, \nabla,$ etc.) on the ambient metric where these objects are defined will be suppressed, unless ambiguities arise (see  for ex. Remark \ref{amb}).

\smallskip
  The \emph{decreasing rearrangement} of a measurable function $f$ on $M$ is defined as 

\be f^*(t)=\inf\big\{s\ge0:\; \mu\{x:|f(x)|>s\})\le t\big\},\qquad  t>0\label{rearr}\ee
(we will assume that $\mu(\{x:|f(x)|>s\})<\infty$  for all  $s>0$).

For $\tau>0$ we have

\be \mu( \{x:|f(x)|>f^*(\tau)\})\le \tau\le \mu(\{x:|f(x)|\ge f^*(\tau)\})\label{tau}\ee
and $f^*(\tau)>0$ if and only if $\mu(\{f\neq0\})>\tau$, in which  case we can find a measurable set $E_\tau$ such that 
\be\mu(E_\tau)=\tau,\qquad \{x:|f(x)|>f^*(\tau)\}\subseteq E_\tau\subseteq\{x:|f(x)|\ge f^*(\tau)\}.\label{set1}\ee
(see  [MQ, (3.11)] and the comments thereafter). Additionally, we have
\be \int_{E_\tau}\Phi(|f(x)|)d\mu(x)=\int_0^\tau \Phi\big(f^*(t)\big)dt\label{int1}\ee
for any nonnegative measurable $\Phi$ on $[0,\infty)$, and in \eqref{int1} we can take $E_\tau=M$ when $\tau=\infty.$

The \emph{Schwarz symmetric decreasing rearrangement} of a measurable function $f$ on $M$ is the function $f^{\#}:\R^n\to[0,\infty)$ defined as 
\be f^\#(\xi)=f^*(B_n|\xi|^n),\qquad \xi\in\R^n.\ee
\smallskip

The \emph{isoperimetric profile} of $M$ is the function $\im:\big[0,\vol
(M)\big)\to[0,\infty)$ (where $\vol(M)\in (0,\infty]$) defined as 
\be \im(v)=\inf\big\{\per(E),\; E\; {\text {measurable}},\; \vol(E)=v\big\}.\ee
In the definition of  $\im(v)$ the sets $E$ can be chosen with smooth boundary (see \cite{mfn} Thm.1).

The classical isoperimetric inequality on $\R^n$ states that for every measurable set $E$ with finite measure 
\be \frac{\per(E)}{|E|^\nn}\ge \frac{\per (B_{\sR^n}(1))}{|B_{\sR^n}(1)|^\nn}=n B_n^{\frac1n},\ee
where $B_{\sR^n}(1)$ is the unit ball of $\R^n$, hence
\be I_{\sR^n}(v)=n B_n^{\frac1n} v^\nn.\ee
The \emph{normalized isoperimetric profile} of $M$  is defined to be the function 
\be I_0(v)=\frac{\im(v)}{I_{\sR^n}(v)}=\frac{\im(v)}{n B_n^{\frac1n} v^\nn}.\ee

It turns out that if $\ric\ge K$ and the unit ball does not collapse, i.e. if there is $\delta>0$ s.t. $\vol(B(x,1))\ge \delta$ for any $x\in M$ (in particular, if $\inj(M)>0$), then $\im(v)>0$ for any $v>0$ (see \cite{afp} Remark 4.7), and the function $v\to \im(v)$ is continuous  on $\big(0,\vol(M)\big)$, in fact it is H\"olderian of order $1-\frac1n$ (\cite{mfn} Theorem 2). 
If additionally  $\inj(M)>0$, then 
\be I_0(0^+):= \lim_{v\to0^+} I_0(v)=1,\ee
since in this case, all the limits at infinity (in the sense of Gromov-Hausdorff)  of the pointed manifolds of $(M,g)$ are $C^{0,\alpha}$ Riemannian manifolds, and one can use (for example) Theorem 1.3 in \cite{apps-mathann}.

\bigskip

To go from isoperimetric profile expansion to small volumes inequalities (see Section~\ref{3}) the first key tool is the following 
   \emph{generalized P\'olya-Szeg\H o inequality} on  an arbitrary complete Riemannian manifold:
\be\int_{\Omega^\#} I_0(B_n |\xi|^n)^p |\nabla u^\#(\xi)|^p d\xi\le \int_{\Omega} |\nabla u(x)|^p d\mu(x),\qquad u\in C_c^\infty(\Omega),\label{ps}
\ee
where $\Omega$ is an arbitrary open set in $M$, and $\Omega^\#$ the ball in $\R^n$ centered at 0 with volume $|\Omega^\#|=\vol(\Omega)$. For a brief discussion of this inequality see Section 2 and the Appendix of \cite{mnq}, where it is also used.

The second key tool is the following \emph{generalized Talenti inequality}: let $\Omega$ open in $M$ with $\vol(\Omega)<\infty$, and  $f\in L^{2n\over n+2}(\Omega)$ when $n\ge 3$  (or $f\in L^p(\Omega)$, $p>1$,  when $n=2$).
Then, the weak solution of $-\Delta u=f$ exists in $W_0^{1,2}(\Omega)$, and the following estimate holds:
 \be u^*(t)\le  n^{-2}B_n^{-\frac2 n}\int_t^{\vol(\Omega)} v^{-2+\frac 2n}I_0(v)^{-2} \int_0^v f^*(w)dwdv,\qquad 0<t\le \vol(\Omega). \label{estlap}\ee

This estimate does not appear explicitly in the literature, but it follows as in the proof of Talenti's classical result in \cite{talenti1},  and it's based on the Fleming-Rishel co-area formula, and the definition of the isoperimetric profile. Specifically, letting
\be \mu(t)=\mu(\{u>t\})\ee
 then we have
\be -\frac d{dt} \int_{u>t} |\nabla u|d\mu=\per(\{u>t\})\ge n B_n^{\frac 1n}\mu(t)^{\frac {n-1}n} I_0((\mu(t))\label{fr1}\ee
which follows from the  classical Fleming-Rishel's identity  (a special case of the coarea formula)   followed by the  inequality 
\be\per(E)\ge nB_n^{\frac 1n}I_0(\vol(E))\vol(E)^{\frac{n-1}n},\ee
which is just the definition of $I_0$.
Additionally, 
\be\bigg(-\frac d{dt} \int_{u>t} |\nabla u|d\mu\bigg)^2\le -\mu'(t)\int_{u>t} f d\mu=-\mu'(t)\int_0^{\mu(t)} f^* dw\label{fr2}\ee
which is obtained by using the equation $f=-\Delta u$; see \cite{talenti1}  eq. (44), \cite{k} proof of Theorem 3.1.1, or \cite{cl} proof of Theorem 1.1. Hence, 
\be 1\le n^{-2}B_n^{-\frac2n} I_0(\mu(t))^{-2}\mu(t)^{-2\frac{n-1}n}\bigg(\int_0^{\mu(t)} f^* dw\bigg) (-\mu'(t)),\label{fr3}\ee
and integrating in $t$, using the fact that $-\mu(t)$ is increasing, 
\be t\le n^{-2}B_n^{-\frac2n}\int_{\mu(t)}^{\tau} v^{-2\frac{n-1}n} I_0(v)^{-2}\int_0^v f^* dw dv,\ee\label{fr4}
which implies \eqref{estlap}.

\begin{rk} Note that on manifolds with $\ric\ge0$ and Euclidean volume growth we have $I_0(v)\searrow \sm^{\frac1n}>0$, as $v\to+\infty$ (see e.g. Theorem 3.8 in \cite{apps-mathann}), hence \eqref{ps} and \eqref{estlap} imply
\be\int_{\Omega^\#}  |\nabla u^\#(\xi)|^p d\xi\le\sm^{-{\frac pn}} \int_{\Omega} |\nabla u(x)|^p d\mu(x)\label{ps-evg}\ee

\be u^*(t)\le  \sm^{-\frac2n}n^{-2}B_n^{-\frac2 n}\int_t^{\vol(\Omega)} v^{-2+\frac 2n}\int_0^v f^*(w)dwdv=\sm^{-\frac2n}v(t)\label{estlap-evg}\ee
where $v$ is the weak solution of $-\Delta v=f^{\#}$.
Estimate \eqref{ps-evg} appears in several papers, se for example \cite{bk}, \cite{apps-mathann}, \cite{nobili}, and references therein. Estimate \eqref{estlap-evg} was first derived  in \cite{cl}.
\vskip1em

\vskip1em
\section{A local $I_1^\a$ inequality}\label{s3}

\bigskip
\begin{theorem}\label{druet} Assume $\ric\ge K$ and $\inj(M)>0$. Then, for any $\a\in(0,1)$ there exists $B>0$ and $r_0>0$ such that for any $x\in M$ and  all $u\in W_0^{1,1}(B(x,r_0))$
\be \|u\|_\nnn^\a\le K(n,1)^\a\big(\|\nabla u\|_1^\a+B\|u\|_1^\a\big).\label{sobdruet}
\ee
\end{theorem}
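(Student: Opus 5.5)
We argue by contradiction, following the blowup scheme of Druet (as adapted in \cite{mnq}), working in harmonic coordinates with uniform $C^{0,\beta}$ control of the metric given by Anderson--Cheeger \cite{ac}. Suppose the statement fails for some $\a\in(0,1)$. Then for every $B>0$ and $r_0>0$ we find points $x$ and functions $u\in W_0^{1,1}(B(x,r_0))$ violating \eqref{sobdruet}. Since $p=1$ is not accessible by variational methods, we approximate by $p>1$. For $p$ close to $1$ and $q=np/(n-p)$, consider
\[
\lambda_p=\inf\frac{\|\nabla u\|_p^{\a}+B\|u\|_p^{\a}}{\|u\|_q^{\a}},\qquad u\in W_0^{1,p}(B(x_p,r_0)).
\]
Using the failure of \eqref{sobdruet} together with a lower semicontinuity argument as $p\to1$, we arrange $\lambda_p<K(n,p)^{-\a}$ for a suitable sequence $p\to1^+$, $B=B_p\to\infty$ and points $x_p$. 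The radius $r_0$ is fixed smaller than the harmonic radius, so that the coordinates cover the balls. Minimizers $u_p\ge0$ normalized by $\|u_p\|_q=1$ exist by strict subcriticality relative to the Euclidean constant, since the metric is $C^{0,\beta}$-close to Euclidean on small balls. They solve an Euler--Lagrange equation of the form
\[
-\Delta_{p,g}u_p+a_p u_p^{p-1}=\lambda_p' u_p^{q-1},
\]
where the coefficients involve $\|\nabla u_p\|_p^{\a-p}$ and $\|u_p\|_p^{\a-p}$. The exponent $\a<1$ enters here: the lower-order coefficient $B\|u_p\|_p^{\a-p}$ forces $\|u_p\|_p\to0$.

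Next we carry out concentration and rescaling. Let $x_p$ be a maximum point and $\mu_p=\|u_p\|_\infty^{-p/(n-p)}\to0$. Define $v_p(\xi)=\mu_p^{(n-p)/p}u_p(\exp(\mu_p\xi))$, understood in harmonic coordinates. The rescaled metrics converge to the Euclidean metric in $C^{0,\beta}$, and $v_p\to v$ locally, with $v$ extremal for the Euclidean $W^{1,1}$ problem, i.e. a multiple of the characteristic function of a ball. The key replacement for \eqref{strong} is the weak bound \eqref{weak}:
\[
v_p(\xi)\le C|\xi|^{-\frac np+1}.
\]
We obtain it without derivatives of $g$, from the monotonicity $\|v_p\|_q\le1$ combined with a De Giorgi--Moser/Serrin-type Harnack estimate for the $p$-Laplacian with merely bounded measurable (here $C^{0,\beta}$) coefficients, applied on annuli. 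A radial comparison gives $v_p(\xi)^q|\xi|^n\lesssim\int_{|\eta|\ge|\xi|/2}v_p^q$, which yields the decay. This is precisely what Lemma \ref{lemma} and \eqref{decay} are meant to record.

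To conclude, we test the inequality against the limit. Write the defining inequality $\|\nabla u_p\|_p^\a+B\|u_p\|_p^\a<K(n,p)^{-\a}$ and compare with the sharp Euclidean inequality in the chart, with metric error $|g_{ij}-\delta_{ij}|\le C|x|^\beta$:
\[
K(n,p)^{-\a}\le\|\nabla u_p\|_{p,\mathrm{eucl}}^\a\le\|\nabla u_p\|_p^\a\Bigl(1+C\!\int|x|^\beta|\nabla u_p|^p\Bigr)^{\a/p}.
\]
Hence $B\|u_p\|_p^\a\lesssim\mu_p^\beta\int|\xi|^\beta|\nabla v_p|^p$. The weak decay bound makes the moments $\int|\xi|^\beta v_p^q$ and $\int|\xi|^\beta|\nabla v_p|^p$ uniformly bounded for small $\beta$, the latter via Caccioppoli on annuli. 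It also gives $\|u_p\|_p\gtrsim\mu_p\|v_p\|_{L^p(B_1)}\gtrsim\mu_p$. Therefore $B\mu_p^\a\lesssim\mu_p^\beta$, i.e. $B\lesssim\mu_p^{\beta-\a}$. This is where $\a<1$ matters: we need $\beta\ge\a$ from the Hölder regularity, and since Anderson--Cheeger gives $C^{0,\beta}$ control for every $\beta<1$, we choose $\beta\in(\a,1)$. Then the right side tends to $0$, contradicting the fact that $B$ is large.

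The main obstacle is the weak pointwise bound with only $C^{0,\beta}$ metric control, uniformly as $p\to1$. A closely related difficulty is controlling $\|u_p\|_p$ from below, since functions concentrating like balls have $L^p$ mass comparable to $\mu_p$ only if tails are controlled. For the bound, I would first try Moser iteration on dyadic annuli with constants uniform in $p\in(1,1+\epsilon)$, using the tail smallness of $\int v_p^q$.
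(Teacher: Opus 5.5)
Your architecture is the paper's: argue by contradiction, take $p\to1$ minimizers, blow up in Anderson--Cheeger charts, and use the weak decay bound in place of the strong one. Your variations are harmless: letting $B_p\to\infty$ at fixed radius instead of fixing $B$ and shrinking $r_p$, taking a H\"older exponent $\beta>\alpha$ instead of $\beta=\alpha$, and comparing directly with the sharp $W^{1,p}$ inequality instead of passing to $\tilde v_p$.

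\textbf{The gap.} You assert that the weak bound makes $\int|\xi|^\beta v_p^q$ and $\int|\xi|^\beta|\nabla v_p|^p$ uniformly bounded. It does not. The bound $v_p\le C|\xi|^{1-n/p}$ gives $v_p^q\le C|\xi|^{-n}$, which is exactly borderline. On $\Omega_p\subseteq\{|\xi|\lesssim\mu_p^{-1}\}$ it only yields $\int|\xi|^\beta v_p^q\lesssim\mu_p^{-\beta}$. Caccioppoli on dyadic annuli gives $O(1)$ energy per annulus, so the gradient moment is again only $\lesssim\mu_p^{-\beta}$. This cancels your gain $\mu_p^\beta$, and your chain ends with $B\mu_p^\alpha\lesssim 1$, which contradicts nothing.

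Uniform moment bounds, and $\|v_p\|_{p,g_p}=1+o(1)$, are exactly what the strong estimate \eqref{strong} supplied in \cite{mnq}. They are exactly what is lost under $C^{0,\beta}$ control; see Remark \ref{weak-strong}. The difficulty you flag, bounding $\|u_p\|_p$ from below, is not the issue: that bound is trivial since $v_p\le 1$ gives $\int v_p^p\ge\int v_p^q$. What is missing is an \emph{upper} bound on $\|v_p\|_{p,g_p}$, and none is available.

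\textbf{The missing idea} is Lemma \ref{lemma}: bound the moments by powers of $\|v_p\|_{p,g_p}$ rather than by constants.
\begin{itemize}
\item Since $(\frac np-1)(q-p)=p$, the weak bound \eqref{decay} says $|\xi|^p v_p^{q-p}\le(8R_0)^p$. Hence $\int|\xi|^pv_p^q\le(8R_0)^p\int v_p^p$.
\item H\"older between this and $\int v_p^q\le 2$ gives $\int|\xi|^{\delta p}v_p^q\lesssim\|v_p\|_{p,g_p}^{\delta p}$ for $\delta\in[0,1]$.
\item Test \eqref{E2} with $|\xi|^pv_p$. No derivatives of the metric enter, and the $B$-term has the good sign. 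Together with Young's inequality this gives $\int|\xi|^p|\nabla_{g_p}v_p|_{g_p}^p\,dV_{g_p}\lesssim\|v_p\|_{p,g_p}^p$.
\item One more H\"older against $\|\nabla_{g_p}v_p\|_{p,g_p}$ gives $\int|\xi|^{\delta p}|\nabla_{g_p}v_p|_{g_p}^p\,dV_{g_p}\lesssim\|v_p\|_{p,g_p}^{\delta p}$.
\end{itemize}
Consequently every metric error is $\lesssim\mu_p^\beta\|v_p\|_{p,g_p}^\beta=\|u_p\|_p^\beta$. This is the same type of quantity as the penalty term $B\|u_p\|_p^\alpha=B\mu_p^\alpha\|v_p\|_{p,g_p}^\alpha$, so the unknown size of $\|v_p\|_{p,g_p}$ drops out.

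With $\beta=\alpha$ this gives the paper's explicit bound \eqref{B}. With your $\beta\in(\alpha,1)$ it gives $B\lesssim\|u_p\|_p^{\beta-\alpha}\to0$, so your endgame does close once these estimates replace the uniform ones.

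Finally, the weak bound itself is not the new difficulty. The paper imports \eqref{decay} from the proof of Lemma 2 in \cite{mnq}, which survives in $C^{0,\alpha}$ charts. Your proposed Moser iteration would additionally need constants uniform as $p\to1$, and that is not automatic.
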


\begin{rk} The proof below will yield an explicit value of  $B$, depending on $n,\alpha,\rh^{}$ (see \eqref{B}).
\end{rk}
\smallskip

\smallskip
\ni{\bf Proof.} We follow closely the proof of Theorem 4 in \cite{mnq}, omitting or condensing details which are very similar (if not identical).    For $p\ge1$, $B>0$ $r>0$ and $x\in M$
\be \lambda_{p,r}(x)=\lambda_{p,r,g}(x)=\inf_{u\in W_0^{1,1}(B_g(x,r))\atop u\not\equiv 0}\frac{\|\nabla u\|_p^\a+B \|u\|_p^\a}{\|u\|_q^\a}\label{lambda}\ee

Fix any $B>0$, and assume that there is no $r_0>0$   so that the conclusion of Theorem~\ref{druet} is true. We will then derive an explicit upper bound on $B$ depending on $n$ and $\rh^{}$, see \eqref{B}.

Under such assumption, for all $r>0$  there is $y_r\in  M$ such that 
\be \lambda_{1,r}(y_r)<K(n,1)^{-\a}.\ee

Arguing as in \cite{mnq} , given $r_k\searrow0$ there is $p_k\searrow 1$ such that for $k$ large enough
\be \lambda_{p_k,r_k}(y_{r_k}^{})<K(n,1)^{-\a}.\ee

For the  rest of this proof  ``$p$'' will denote an element of the original  sequence $p_k\searrow 1,$ or a subsequence of it, and   the notation $p\to1$ will always mean ``up to a subsequence of $\{p_k\}$''. To be consistent with the notation used in \cite{mnq},  \cite{druet-pams}, and \cite{noa-arxiv}, given any subsequence of $\{p_k\}$ we will use $r_p$ to denote the corresponding subsequence  of $r_k\searrow 0$.

\smallskip
We will let 

\be\lambda_p=\ \lambda_{p,r_p}(y_p)\ee

so we  can then  assume that as $p\to 1$ 
\be \lambda_p< K(n,1)^{-\a} <K(n,p)^{-\a}.\label{L2}\ee
(for the last inequality see Remark 3 in \cite{mnq}).

\medskip
Let  $u_p$ be a minimizer of the functional in \eqref{lambda}, such that

\be u_p\in C^{1,\eta}\big(B(y_p,r_p)\big), \quad {\text {some} }\; \;\eta>0\ee

\be u_p>0 \;\;{\text {on} }\; \;B(y_p,r_p),\qquad u_p=0 \;\;{\text {on} }\;\;  \p B(y_p,r_p)\ee

\be -\Delta_p u_p+B\|u_p\|_p^{\a-p}\|\nabla u_p\|_p^{p-\a}u_p^{p-1}=\lambda_p\|\nabla u_p\|_p^{p-\a}u_p^{q-1}\label{E1}\ee

\be \|u_p\|_q=1,\label{u1}\ee

\be \|\nabla u_p\|_p^\a+B\|u_p\|_p^\a=\lambda_p.\label{eq2}\ee

where $\Delta_p={\rm {div} }\big(|\nabla u|^{p-2}\nabla u\big)$ denotes the $p$-Laplacian in the metric $g$.

If $x_p$ denotes the maximum point for $u_p$, and 

\be\mu_p^{1-n/p}:= u(x_p)=\max u_p,\ee

then the following hold true:

\be \lim_{p\to1}\mu_p=0,\label{p1}\ee

\be \lim_{p\to1} \|u_p\|_p=0,\label{p2}\ee

\be \lim_{p\to1}\lambda_p=K(n,1)^{-\a},\label{p3}\ee

\be \lim_{p\to1} \|\nabla u_p\|_p=K(n,1)^{-1}.\label{p4}\ee

For the proof see \cite{mnq} Proposition 1, the only difference is that for \eqref{p3}, \eqref{p4} we now use \eqref{L2}, \eqref{eq2}.

\medskip
Now we construct the blowup at the maximum points $x_p$,  in $C^{0,\a}$ harmonic coordinates. By results of Anderson-Cheeger \cite {ac}, under $\ric\ge K$ and $\inj(M)>0$, the $C^{0,\a}-$harmonic radius $\rh^{}$ is positive, and we can find a harmonic chart
  $\Phi_p:B(x_p,\rh^{})\to \R^n$,  with $\Phi_p(x_p)=~0$, coordinates $\xi=\Phi_p(y)$, and such that if $g_{ij}^{(p)}(\xi)$ are the components of $g$ in that chart, 
  then
  \smallskip
\be g_{ij}^{(p)}(0)=\delta_{ij} \label{h1} \ee

\be \frac14\delta_{ij}\le g_{ij}^{(p)}\le 4 \delta_{ij} \quad {\text{as bilinear forms}} \label{h2} \ee

\be \rh^{\a} | g_{ij}^{(p)}|_{C^{0,\a}(B(x_p,\rh^{}))}\le 3.\label{h4} \ee
  
  In particular, note that if $y,z\in B(x_p,\rh^{})$ $\eta=\Phi_p(z)$, $\xi=\Phi_p(y)$, then
  \be\frac12 |\xi-\eta|\le d(y,z)\le 2|\xi-\eta|\ee
  so if $y\in B(x_p,\rh^{})$ and $r<\rh^{}-d(x_p,y)$ then 
  \be B_{\sR^n}(\xi,\half r)\subseteq \Phi_p\big(B(y,r)\big)\subseteq B_{\sR^n}(\xi,2r)\label{balls0}\ee
  and in particular
   \be B_{\sR^n}(0,\half \rh^{})\subseteq \Phi_p\big(B(x_p,\rh^{})\big)\subseteq B_{\sR^n}(0,2\rh^{})\ee
As a consequence, if $g_{(p)}^{ij}$ denote the component of the inverse matrix of $\{g_{ij}^{(p)}\}$ and if  
\be0<\dh<\min\Big\{\frac{\rh^{}} 2, (3n^2)^{-\frac1{\a}}\Big\}\label{deltah}\ee
 then for $|\xi|<\dh$
\begin{align} |g_{ij}^{(p)}(\xi)-\delta_{ij}|&\le3\rh^{-\a}|\xi|^{\a}\label{gij1}\\
 |g_{(p)}^{ij}(\xi)-\delta_{ij}|&\le 6\rh^{-\a}|\xi|^{\a},\label{gij2}\\
\big|\sqrt{|g_{(p)}|(\xi)}-1\big|&\le3 n^2 \rh^{-\a} |\xi|^{\a},\label{detg}\end{align}
where $|g_{(p)}|=\det\big(g_{ij}^{(p)}\big)$.  Additionally, if we put the metric $g_{(p)}$ on $B_{\sR^n}(0,\frac12\rh^{})$, then \eqref{balls0} becomes
\be B_{\sR^n}(\xi,\half r)\subseteq B_{g_{(p)}}(\xi,r)\subseteq B_{\sR^n}(\xi,2r),\qquad |\xi|<\dh,\; r\le\dh\label{balls}\ee
where  $B_{g_{(p)}}$ denotes a ball in the metric $g_{(p)}$.

 For $p$ close enough to 1 we let
\be \Omega_p=\mu_p^{-1} \Phi_p(B(y_p,r_p)).\ee
Clearly (recall that $d(x_p,y_p)<r_p\to0$)
\be \Omega_p\subseteq \mu_p^{-1} \Phi_p(B(x_p,2r_p))\subseteq B_{\sR^n}(0,4\mu_p^{-1} r_p)\subseteq B_{\sR^n}(0,\mu_p^{-1}\dh)\subseteq  \R^n\label{omegap0}\ee
 Let 
(with some  abuse of language) $u_p(\xi)$ be $u_p(\Phi_p^{-1}(\xi))$, for $\xi\in \Phi_p(B(y_p,r_p))$, and 
\be v_p(\xi)=\bc\mu_p^{{\frac np}-1}u_p(\mu_p\xi ) & {\text{ if  }}\xi\in \Omega_p\\ 0 & {\text{ if }} \xi\in \R^n\setminus \Omega_p.\ec\ee

On $B_{\sR^n}(0,\mu_p^{-1}\dh)$, in particular on $\Omega_p$,  we consider the metric defined on $M$ as $\gp=\mu_p^{-2}g$, which in the coordinate chart given by $\mu_p^{-1}\Phi_p$,   is simply the tensor with components $g_{ij}^{(p)}(\mu_p\xi)$.  In the sequel $dV=dV_g$ denotes the volume element in the original metric $g$ on $M$, while the volume element in the metric $\gp$  on $\Omega_p$ will be denoted as
\be dV_\gp(\xi)=\sqrt{|g^{(p)}|(\mu_p \xi)}\;d\xi\label{dvp},\qquad |g^{(p)}|=\det\big(g_{ij}^{(p)}\big).\ee

\smallskip
\begin{rk}\label{amb}
In order to minimize notation a bit we will only emphasize the role of the metric $\gp$ when necessary. For 
example $\|\cdot\|_{p,\gp}$ will denote the $L^p$ norm of a functions defined on $\Omega_p$  with respect to the measure in \eqref{dvp} while $\|u_p \|_p$  will continue to denote the norm of $u_p$ w.r. to the original metric $g$. Similarly, for a function $v$ defined on $\Omega_p$, the notation $|\nabla_\gp v|_{\gp}$  will mean that the gradient and the inner product are w.r. to $\gp$, while $|\nabla v|$ is just the Euclidean norm of the Euclidean gradient.
\end{rk}

\smallskip
Since $\Omega_p\subseteq B_{\sR^n}(0,\mu_p^{-1}\dh)$,   \eqref{gij1}, \eqref{gij2}, \eqref{detg}, \eqref{balls}, for $\xi\in \Omega_p$ can be rewritten as 

 \be  |(g_p)_{ij}(\xi)-\delta_{ij}|= |g_{ij}^{(p)}(\mu_p\xi)-\delta_{ij}|\le3\rh^{-\a}\mu_p^\a|\xi|^\a \le 12 \rh^{-\a} r_p^\a\label{gij12}\ee
 \be |g_p^{ij}(\xi)-\delta^{ij}|\le6\rh^{-\a} \mu_p^\a|\xi|^\a\le 24\rh^{-\a} r_p^\a\label{gij22}\ee
\be\Big|\sqrt{|g_p|(\xi)}-1\Big|\le 3n^2\rh^{-\a} \mu_p^\a|\xi|^\a\le 12 n^2 \rh^{-\a}r_p^\a, \label{detg2}\ee

where the first inequalities are true if we only have $\mu_p|\xi|<\dh$, and also
\be\frac12|\xi-\eta|\le d_{\gp}(\xi,\eta)\le 2|\xi-\eta|\ee
\be B_{\sR^n}(\xi,{\tfrac12}r)\subseteq B_{\gp}(\xi,r)\subseteq B_{\sR^n}(\xi,2r),\qquad |\xi|<\mu_p^{-1}\dh,\; r\le \mu_p^{-1}\dh\label{ballsp}\ee
(in particular for any $\xi,r$ if $p$ close enough to 1).

 \smallskip
If $\Delta_{p,\gp}$ denotes the $p-$Laplacian in the metric $\gp$, then we have the identities

\be \|v_p\|_{r,\gp}=\mu_p^{\frac n p-\frac n r-1}\|u_p\|_r\label{v1}\ee

\be\|\nabla_\gp v_p\|_{r,\gp}=\mu_p^{\frac n p-\frac n r}\|\nabla u_p\|_r\label{v2}\ee

\be- \Delta_{p,\gp} v_p+B\mu_p^\a\|v_p\|_{p,\gp}^{\a-p}\|\nabla_\gp v_p\|_{p,\gp}^{p-\a}v_p^{p-1}=\lambda_p\|\nabla_\gp v_p\|_{p,\gp}^{p-\a}v_p^{q-1}\label{E2}\ee
 In particular 
\be \|v_p\|_{q,\gp}=\|u_p\|_q=1\label{vpq}\ee
\be \|\nabla_\gp v_p\|_{p,\gp}=\|\nabla u_p\|_p.\label{pp}\ee

Now let
\be \vt_p=v_p^{\frac {p(n-1)}{n-p}}\label{vtilde}\ee
so that $\vt_p^\nnn=v_p^q$. We now claim that
\be \lim_{p\to1}\frac{\|\nabla \vt_p\|_1}{\|\vt_p\|_{\nnn}}=K(n,1)^{-1}.\label{sob1}\ee

Indeed from the sharp Sobolev inequality  on $\R^n$ we have $\|\nabla \vt_p\|_1\ge K(n,1)^{-1}{\|\vt_p\|_{\nnn}}$ which implies
\be \liminf_{p\to1}\frac {\|\nabla \vt_p\|_1}{\|\vt_p\|_{\nnn}}\ge K(n,1)^{-1}\ee

On the other hand, from 
 \eqref{h2},  \eqref{gij22}  and the definition of gradient, we have
\be \frac14|\nabla \vt_p|^2\le |\nabla_\gp\vt_p|_\gp^2\le 4|\nabla \vt_p|^2.\ee

\be\ba &\big||\nabla_\gp\vt_p|_\gp^2-|\nabla \vt_p|^2\big|\le6\rh^{-\a} \mu_p^\a|\xi|^\a \bigg(\sum_{k=1}^n |\p_k \vt_p|\bigg)^2\cr&\le 6n\rh^{-\a}\mu_p^\a|\xi|^\a |\nabla \vt_p|^2\le  24n\rh^{-\a} \mu_p^\a|\xi|^\a |\nabla_\gp \vt_p|_\gp^2\ea\ee
hence
\be  |\nabla \vt_p|^2\le|\nabla_\gp\vt_p|_\gp^2\big(1+ 24n\rh^{-\a}\mu_p^\a|\xi|^\a \big)
\ee
and finally
\be |\nabla \vt_p|\le|\nabla_\gp\vt_p|_\gp\big(1+12 n\rh^{-1} \mu_p^\a|\xi|^\a \big).\ee
Using the above and \eqref{detg2}, \eqref{omegap0}, and the fact that, for $\xi\in \Omega_p$  (and $p$ close to 1)
\be{\frac12}\le \sqrt{|g_p|(\xi)}\le 2\label{volgp}\ee
we get
\be \Big| \sqrt{|g_p|(\xi)}-1\Big|
\le 3n^2\rh^{-\a}\mu_p^\a|\xi|^\a\le 6n^2\rh^{-\a}\mu_p^\a|\xi|^\a\sqrt{|g_p|(\xi)} \le 24n^2\rh^{-\a} r_p^\a\sqrt{|g_p|(\xi)}
\label{vest}\ee
and
\be\ba \|\nabla \vt_p\|_1&\le\int_{\R^n} |\nabla_\gp \vt_p|_\gp\big(1+ 12n \rh^{-\a}\mu_p^\a|\xi|^\a \big)\big(1+3n^2 \rh^{-\a}\mu_p^\a|\xi|^\a\big)dV_\gp(\xi)\cr&
\le \int_{\R^n} |\nabla_\gp \vt_p|_\gp\big(1+16n^2\rh^{-\a}\mu_p^\a|\xi|^\a\big)dV_\gp(\xi)
\le \|\nabla_\gp \vt_p\|_{1,\gp}\big(1+64n^2\rh^{-\a} r_p^\a\big)\ea\label{1}\ee
for $p$ close enough to 1. Arguing as in the proof of \eqref{p3}
\be\ba \|\nabla_\gp \vt_p\|_{1,\gp}&=\frac{p(n-1)}{n-p}\int_{\R^n}|\nabla_\gp v_p|_\gp v_p^{\frac {n(p-1)}{n-p}}dV_\gp(\xi)\cr&\le \frac{p(n-1)}{n-p}\|\nabla_\gp v_p\|_{p,\gp}\|v_p\|_{q,\gp}=
 \frac{p(n-1)}{n-p}\|\nabla u_p\|_p\cr&= \frac{p(n-1)}{n-p}(\lambda_p-B\|u_p\|_p^\a)^{1/\a}\to K(n,1)^{-1}
\ea\label{2}
\ee
which together with 
\be\ba\|\vt_p\|_{\nnn}&=\int_{\R^n}v_p^q\;d\xi\ge\int_{\R^n}v_p^q\big(1-6n^2\rh^{-\a}\mu_p^\a|\xi|^\a)\big)dV_\gp(\xi)\cr&\ge (1-24n^2\rh^{-\a} r_p^\a)\int_{\R^n}v_p^qdV_\gp(\xi)\to1 \ea\label{3} \ee
(where we used \eqref{vest}) yields $\limsup_{p\to1}{\|\nabla \vt_p\|_1}/{\|\vt_p\|_{\nnn}}\le K(n,1)^{-1}$, and hence \eqref{sob1}.

Now, \eqref{sob1} implies that $\{\vt_p\}$ is uniformly bounded in $W^{1,1}(\R^n)$ and therefore there is $v_0\in BV(\R^n)$ such that $\vt\to v_0$ weakly in $BV(\R^n)$, i.e. $\vt_p\to v_0$ in $L^1(K)$ for any $K$ compact, and $\int \phi\cdot \nabla \vt_p d\xi\to\int \phi\cdot Dv_0$, for any $\phi$ smooth, compactly supported and valued in $\R^n$.

Proceeding as in \cite{druet-pams} and \cite{noa-arxiv},  we get that $\vt_p\to \chi_{B_{\R^n}^{}(\xi_0,R_0)}$ strongly in $BV(\R^n)$, (in particular in $L^\nnn(\R^n)$) where $\xi_0\in\R^n$ and 
\be |B_{\R^n}^{}(0,R_0)|=B_nR_0^n=1.\ee

As it turns out,

 \be |\xi_0|\le 4R_0,\label{xi0}\ee

and $v_p$ satisfies the following weak estimate:  for  $p$ close enough to 1 we have
\be v_p(\xi)\le \min\bigg\{1, \bigg(\frac{8R_0}{|\xi|}\bigg)^{\frac{n}{p}-1}\bigg\},\qquad \xi\in\R^n.\label{decay} \ee

The  proofs of  these estimates are contained in the  proof of Lemma 2 in \cite{mnq}, where they were proved in $C^1$ harmonic charts, but they go through unscathed in our $C^{0,\alpha}$ setting.

\medskip
 From the Sobolev inequality we have
\be\frac{\|\vt_p\|_\nnn}{\|\nabla\vt_p\|_1}\le K(n,1).\label{d8}\ee

\medskip
From \eqref{vpq}, \eqref{vtilde}, and  \eqref{detg2}  we have 
\be\ba\|\vt_p\|_{\nnn}&\ge\int_{\R^n}v_p^q\Big(\sqrt{|g_p|(\xi)}-3n^2\rh^{-\a}\mu_p^\a|\xi|^\a\Big)d\xi =1-3n^2\rh^{-\a} \mu_p^\a\int_{\R^n} |\xi|^\a v_p^q d\xi\ea\ee
and
\be\ba\int_{\R^n} |\xi|^\a v_p^q d\xi=\int_{\R^n} |\xi|^\a v_p^{q/p} v^{q(1-1/p)} d\xi
&\le\bigg(\int_{\R^n} |\xi|^{\a p} v_p^q d\xi\bigg)^{\frac1p}\bigg(\int_{\R^n}  v_p^q d\xi\bigg)^{\frac1{p'}}\cr&
\le 2^{\frac 1{p'}}\bigg(\int_{\R^n} |\xi|^{\a p} v_p^q d\xi\bigg)^{\frac1p}. \ea
\ee
so
\be\ba\|\vt_p\|_{\nnn}&\ge1-3\cdot 2^{\frac1{p'}}n^2\rh^{-\a} \mu_p^\a\bigg(\int_{\R^n} |\xi|^{\a p} v_p^q d\xi\bigg)^{\frac1p}. \label{vnorm}\ea\ee
 From \eqref{1}, \eqref{2}, \eqref{L2} we have
\be\ba \|\nabla \vt_p\|_1
&\le \int_{\R^n} |\nabla_\gp \vt_p|_\gp\big(1+16n^2 \rh^{-\a}\mu_p^\a|\xi|^\a \big)dV_\gp\cr&= \frac{p(n-1)}{n-p}\big(\lambda_p-{B}\|u_p\|_p^{\a}\big)^{\frac1\a}+16n^2\rh^{-\a}\mu_p^\a \int_{\R^n}  |\xi|^\a |\nabla_\gp \vt_p|_\gp dV_\gp\cr&=
\frac{p(n-1)}{n-p}\big(\lambda_p-{B}\mu_p^\a\|v_p\|_{p,\gp}^\a\big)^{\frac1\a}+16n^2\rh^{-\a}\mu_p^\a \int_{\R^n}  |\xi|^\a |\nabla_\gp \vt_p|_\gp dV_\gp
\cr&<K(n,1)^{-1}-K(n,1)^{-1+\a}B\mu_p^\a\|v_p\|_{p,\gp}^\a+16n^2\rh^{-\a}\mu_p^\a \int_{\R^n}  |\xi|^\a |\nabla_\gp \vt_p|_\gp dV_\gp.
\ea\label{nablavnorm1}
\ee

 Note that 

\be\ba &\int_{\R^n}  |\xi|^\a |\nabla_\gp \vt_p|_\gp dV_\gp=\frac{p(n-1)}{n-p}\int_{\R^n}  |\xi|^\a v_p^{\frac{n(p-1)}{n-p}}  |\nabla v_p|_\gp dV_\gp\cr&\le 2(\int_{\R^n} v_p^{p'\frac{n(p-1)}{n-p}} dV_{\gp}\bigg)^{\frac1{p'}}\bigg(\int_{\R^n}  |\xi|^{\a p} |\nabla_{\gp} v_p|_{\gp}^p dV_\gp\bigg)^{\frac1p} \cr&=2\bigg(\int_{\R^n}  |\xi|^{\a p} |\nabla_{\gp} v_p|_{\gp}^p dV_\gp\bigg)^{\frac1p}.\label{d0}
\ea\ee
so \eqref{nablavnorm1}  implies
\be\|\nabla \vt_p\|_1
\le K(n,1)^{-1}-K(n,1)^{-1+\a}B\mu_p^\a\|v_p\|_{p,\gp}^\a+32n^2\rh^{-\a}\mu_p^\a \bigg(\int_{\R^n}  |\xi|^{\a p}  |\nabla_\gp v_p|_\gp^p dV_\gp\bigg)^{\frac1 p}.
\label{nablavnorm2}
\ee

In the introduction we stated that the main technical difficulty we had to bypass, going from $C^1$ charts to $C^{0,\alpha}$ charts,  was the use of the strong pointwise estimate \eqref{strong}.  The next lemma is precisely what we need:

\begin{lemma}\label{lemma} For any $\delta\in [0,1]$  we have 
 
\be \int_{\R^n}|\xi|^{\delta p}v_p^qd\xi\le2^{1-\d} (8R_0)^{\delta p}\bigg(\int_{\R^n} v_p^p d\xi\bigg)^\delta\le 2(8R_0)^{\delta p} \|v_p\|_{p,\gp}^{\delta p}\label{dp1}\ee
and
\be \int_{\R^n}  |\xi|^{\a p}  |\nabla_\gp v_p|_\gp^p dV_\gp\le 34n^{2\a-1}R_0^{1-\a}\|v_p\|_{p,\gp}^{\alpha p}.\label{dp2}\ee

\end{lemma}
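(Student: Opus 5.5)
The plan is to derive both estimates from the weak pointwise bound \eqref{decay} together with the Euler--Lagrange equation \eqref{E2}. No decay of $\nabla v_p$ or control on derivatives of the metric will be used. The first estimate is purely pointwise; the second needs an integration by parts, and its cross term is where I expect the work to be.

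\emph{Proof of \eqref{dp1}.} The exponents fit together exactly: $q-p=\frac{p^2}{n-p}$, so $\big(\frac{n}{p}-1\big)(q-p)=p$. Hence \eqref{decay} gives
\begin{equation*}
v_p^{q-p}(\xi)\le \Big(\frac{8R_0}{|\xi|}\Big)^{p},
\qquad\text{that is,}\qquad
|\xi|^p v_p^q\le (8R_0)^p v_p^p \quad\text{for all }\xi .
\end{equation*}
For $\delta\in[0,1]$ I will write $|\xi|^{\delta p}v_p^q=(|\xi|^pv_p^q)^{\delta}(v_p^q)^{1-\delta}$ and apply H\"older with exponents $1/\delta$ and $1/(1-\delta)$. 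This gives
\begin{equation*}
\int_{\R^n}|\xi|^{\delta p}v_p^q\,d\xi\le (8R_0)^{\delta p}\Big(\int_{\R^n} v_p^p\,d\xi\Big)^{\delta}\Big(\int_{\R^n} v_p^q\,d\xi\Big)^{1-\delta}.
\end{equation*}
By \eqref{volgp} and \eqref{vpq}, $\int v_p^q\,d\xi\le 2\int v_p^q\,dV_\gp=2$, which yields the first inequality in \eqref{dp1}. In the same way $\int v_p^p\,d\xi\le 2\|v_p\|_{p,\gp}^p$, and $2^{1-\delta}2^{\delta}=2$ gives the second.

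\emph{Proof of \eqref{dp2}.} I will test \eqref{E2} against $\varphi=|\xi|^{\a p}v_p$, which is admissible because $v_p$ vanishes on $\partial\Omega_p$ and is $C^{1,\eta}$. After integrating by parts in the metric $\gp$, the lower-order term $B\mu_p^\a(\cdots)v_p^{p}$ is nonnegative and is dropped. This leaves
\begin{equation*}
\int |\xi|^{\a p}|\nabla_\gp v_p|_\gp^p\,dV_\gp\le \lambda_p\|\nabla_\gp v_p\|_{p,\gp}^{p-\a}\int |\xi|^{\a p} v_p^q\,dV_\gp+\a p\int |\xi|^{\a p-1}v_p|\nabla_\gp v_p|_\gp^{p-1}\big|\nabla|\xi|\big|_\gp\,dV_\gp .
\end{equation*}
In the first term, $\lambda_p$ and $\|\nabla_\gp v_p\|_{p,\gp}$ are bounded by \eqref{p3} and \eqref{p4}. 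Then \eqref{dp1} with $\delta=\a$ bounds that term by $C R_0^{\a p}\|v_p\|_{p,\gp}^{\a p}$. Comparing Euclidean and $\gp$ norms of $\nabla|\xi|$ costs only a factor $2$, by \eqref{h2}.

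\emph{The cross term (main obstacle).} I would first split $|\xi|^{\a p-1}=|\xi|^{\a(p-1)}|\xi|^{\a-1}$ and apply H\"older with exponents $p'$ and $p$:
\begin{equation*}
\int |\xi|^{\a p-1}v_p|\nabla_\gp v_p|_\gp^{p-1}\,dV_\gp\le\Big(\int|\xi|^{\a p}|\nabla_\gp v_p|_\gp^p\,dV_\gp\Big)^{1/p'}\Big(\int|\xi|^{(\a-1)p}v_p^p\,dV_\gp\Big)^{1/p}.
\end{equation*}
Young's inequality then absorbs the first factor into the left-hand side. What remains is to show that $\int|\xi|^{(\a-1)p}v_p^p$ is bounded by $CR_0^{(\a-1)p}\|v_p\|_{p,\gp}^{\a p}$ or better. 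I would split this integral at $|\xi|=R_0$. On $|\xi|\ge R_0$ the weight is at most $R_0^{(\a-1)p}$, so this part is bounded by $R_0^{(\a-1)p}\|v_p\|_{p,\gp}^p$, and $\|v_p\|_{p,\gp}^p\le\|v_p\|_{p,\gp}^{\a p}$ once $\|v_p\|_{p,\gp}\le1$. On $|\xi|<R_0$ I would use $v_p\le1$ and interpolate with the moment bound \eqref{dp1}, exploiting that the singular weight $|\xi|^{(\a-1)p}$ is integrable.

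The concrete constant $34n^{2\a-1}R_0^{1-\a}$ suggests the authors bound things more crudely. They may also use that the cross term carries the small factor $p-1$, or use $|\xi|\le 4\mu_p^{-1}r_p$. If the split does not close, I would use that smallness in $p$. This balancing of the cross term is the step I expect to be hardest.
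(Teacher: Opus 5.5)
Your proof of \eqref{dp1} is correct and is essentially the paper's argument, organized a bit more cleanly. Both rest on the identity $(\frac np-1)(q-p)=p$ followed by H\"older.

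The gap is in the cross term of \eqref{dp2}. After Young's inequality you need $\int|\xi|^{(\a-1)p}v_p^p\,dV_\gp\le C\|v_p\|_{p,\gp}^{\a p}$. On $\{|\xi|\ge R_0\}$ your bound is $R_0^{(\a-1)p}\|v_p\|_{p,\gp}^p$, which you convert using ``$\|v_p\|_{p,\gp}\le 1$''. That inequality goes the other way. Since $0\le v_p\le 1$ and $q>p$, we have $v_p^p\ge v_p^q$, hence $\|v_p\|_{p,\gp}^p\ge\int v_p^q\,dV_\gp=1$. Moreover, under the weak decay \eqref{decay} alone, $v_p^p\lesssim|\xi|^{-(n-p)}$ is not integrable at infinity, and the only available information is $\mu_p\|v_p\|_{p,\gp}\to0$. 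So nothing prevents $\|v_p\|_{p,\gp}\to\infty$ as $p\to1$, which is exactly the point of Remark \ref{weak-strong}.

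Your split therefore only yields $\int|\xi|^{\a p}|\nabla_\gp v_p|_\gp^p\,dV_\gp\le C\|v_p\|_{p,\gp}^{p}$, and the exponent is not cosmetic. In \eqref{nablavnorm3} the error term must be $O(\mu_p^\a\|v_p\|_{p,\gp}^\a)$ so that it can be compared with $-B\mu_p^\a\|v_p\|_{p,\gp}^\a$ and produce \eqref{B}; an error of size $\mu_p^\a\|v_p\|_{p,\gp}$ would not do this. Your fallbacks do not help either. The cross term carries the factor $\a p\to\a$, not $p-1$. Using $|\xi|\le 4\mu_p^{-1}r_p$ brings in a factor that blows up.

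The missing idea in the paper is to test \eqref{E2} with $|\xi|^{p}v_p$ instead of $|\xi|^{\a p}v_p$.
\begin{itemize}
\item The cross term then becomes $p\int|\xi|^{p-1}v_p|\nabla_\gp v_p|_\gp^{p-1}dV_\gp$. Its weight pairs exactly with the gradient, so H\"older gives $p\|v_p\|_{p,\gp}\big(\int|\xi|^p|\nabla_\gp v_p|_\gp^p\,dV_\gp\big)^{1/p'}$ with no singular weight left over.
\item Combined with \eqref{dp1} for $\d=1$ and Young's inequality, this gives $\int|\xi|^p|\nabla_\gp v_p|_\gp^p\,dV_\gp\le(p^p+33pn)\|v_p\|_{p,\gp}^p$.
\item The power $\|v_p\|_{p,\gp}^{\a p}$ then comes from H\"older interpolation, with exponents $1/\a$ and $1/(1-\a)$, against the bounded quantity $\|\nabla_\gp v_p\|_{p,\gp}^p$.
\end{itemize}

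Alternatively, you could keep your test function and bound the troublesome integral as $\int|\xi|^{(\a-1)p}v_p^p\le\big(\int|\xi|^{-p}v_p^p\big)^{1-\a}\big(\int v_p^p\big)^{\a}$. The first factor is controlled by Hardy's inequality, $\int_{\R^n}|\xi|^{-p}v_p^p\,d\xi\le(\frac{p}{n-p})^p\int_{\R^n}|\nabla v_p|^p\,d\xi$, which is bounded. Another option is to split at radius $\|v_p\|_{p,\gp}$ rather than $R_0$ and use \eqref{decay} on the inner region. Either repair gives $C\|v_p\|_{p,\gp}^{\a p}$, though with a constant different from the one stated. The step you actually proposed does not give this bound.
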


\begin{rk}\label{weak-strong} We note that in \cite{mnq}, working  with positive $C^1$ harmonic radius, we used the strong estimate
\be v_p(\xi)\le \min\bigg\{1, \bigg(\frac{8R_0}{|\xi|}\bigg)^{\frac{n-p-b}{p-1}}\bigg\},\qquad \xi\in\R^n.\label{decay-1} \ee
and obtained that for $\delta=1/p$  the integral on the left hand side of \eqref{dp1} is bounded by $2\cdot 8^{n+1}R_0$,  and the integral on the left hand side of \eqref{dp2} (when $\alpha=1$) is bounded by $16n 8^n$. We also used that $\|v_p\|_{p,\gp}=1+o(1)$, as $p\to 1$, which follows from the strong estimate \eqref{decay-1} (but not from the weak estimate \eqref{decay}).

\bigskip
\ni{\bf Proof.}  Below we will use repeatedly the volume comparison \eqref{volgp}, and
\be \|\nabla_{\gp}v_p\|_{p,\gp}=\big(\lambda_p-B\mu_p^\a\|v_p\|_{p,\gp}^\a\big)^{\frac1\a}.\ee
 
It's enough to prove \eqref{dp1} for $\d\in (0,1).$  
We have

\be\ba\int_{\R^n } |\xi|^{\d p} v_p^q d\xi&=\int_{\R^n } |\xi|^{\d p} v_p^{q-\d p}v_p^{\delta p} d\xi\cr&
\le \bigg(\int_{\R^n } |\xi|^{\frac{\d p}{1-\d}  } v_p^{\frac {q-\d p}{1-\d}}d\xi \bigg)^{1-\d} 
\bigg(\int_{\R^n } v_p^pd\xi \bigg)^{\d}\cr&=  \bigg(\int_{\R^n } |\xi|^{\frac{\d p}{1-\d}  } v_p^{\frac {q-\d p}{1-\d}-q}v_p^qd\xi \bigg)^{1-\d} \bigg(\int_{\R^n } v_p^pd\xi \bigg)^{\d}\cr
\ea\ee 
using \eqref{decay} we have
\be   |\xi|^{\frac{\d p}{1-\d}  } v_p^{\frac {q-\d p}{1-\d}-q}\le |\xi|^{\frac{\d p}{1-\d}  }\bigg (\frac{8R_0}{|\xi|}\bigg)^{\frac {\d(q-p)}{1-\d}{(\frac n p-1)}}=(8R_0)^{\frac {\delta p}{1-\d}}\ee
so that 
\be\int_{\R^n } |\xi|^{\d p} v_p^q d\xi\le (8R_0)^{\delta p} \bigg(\int_{\R^n } v_p^qd\xi \bigg)^{1-\d}\ \bigg(\int_{\R^n } v_p^pd\xi \bigg)^{\d}\ee
which gives \eqref{dp1}.

To prove \eqref{dp2}, let's start by  multiplying \eqref{E2} by $|\xi|^{p} v_p$, integrating by parts,  and using \eqref{dp1} with $\delta=1$, obtaining

\be\ba &\int_{\R^n} |\nabla_\gp v_p|_\gp^{p-2}\langle\nabla_\gp (|\xi|^{p} v_p),\nabla_\gp v_p\rangle_\gp dV_\gp\cr
&=\lambda_p\|\nabla_\gp v_p\|_{p,\gp}^{p-\a}\int_{\R^n} |\xi|^{ p}v_p^q dV_\gp-B\mu_p^\a\|v_p\|_{p,\gp}^{\a-p}\|\nabla_\gp v_p\|_{p,\gp}^{p-\a}\int_{\R^n} |\xi|^{ p} v_p^pdV_\gp
\cr&\le 
\lambda_p\|\nabla_\gp v_p\|_{p,\gp}^{p-\a}\int_{\R^n} |\xi|^{ p}v_p^q dV_\gp\le
 2\lambda_p^{\frac p\a}\int_{\R^n} |\xi|^{ p}v_p^q d\xi\cr&\le 4(8R_0)^{ p}\lambda_p^{\frac p\a}\|v_p\|^{ p}_{p,g_p}\le 33 n  \|v_p\|^{p}_{p,g_p},\label{vpxi3}
\ea\ee
where we used $\lambda_p\to K(n,1)^{-\a}=(n/R_0)^{\a}$.

On the other hand (where as usual $\xi^*=\xi/|\xi|$)
\be\ba&\int_{\R^n} |\nabla_\gp v_p|_\gp^{p-2}\langle\nabla_\gp (|\xi|^{ p} v_p),\nabla_\gp v_p\rangle_\gp dV_\gp=\int_{\R^n}
| \nabla_\gp v_p|_\gp^p |\xi|^{ p}dV_\gp\cr&+ p\int_{\R^n} |\xi|^{ p-1} v_p  |\nabla_\gp v_p|_\gp^{p-2}\langle \xi^*, \nabla_\gp v_p\rangle_\gp dV_\gp \ge \int_{\R^n}
| \nabla_\gp v_p|_\gp^p |\xi|^{p}dV_\gp\cr&-p\int_{\R^n} v_p |\xi|^{ p-1}   |\nabla_\gp v_p|_\gp^{p-1} dV_\gp.\label{d4}
 \ea\ee

From H\"older's and Young's inequalities  
\be \ba p\int_{\R^n} v_p|\xi|^{p-1}  |\nabla_\gp v_p|_\gp^{p-1} dV_\gp&\le p \|v_p\|_{p,\gp}\bigg(\int_{\R^n} |\xi|^{p}  |\nabla_\gp v_p|_\gp^{p} dV_\gp\bigg)^{\frac {p-1}p}\cr&
\le \frac{p^p \|v_p\|_{p,\gp}^p} p+\frac{1}{p'}\int_{\R^n} |\xi|^{p}  |\nabla_\gp v_p|_{p,\gp}^{p} dV_\gp
\label{d5}\ea\ee
then, combining \eqref{vpxi3}, \eqref{d4}, \eqref{d5} we get
\be \int_{\R^n} |\xi|^{p}  |\nabla_\gp v_p|_\gp^{p} dV_\gp\le (p^p+ 33pn) \|v_p\|_{p,\gp}^p.
\label{d5a}
\ee

One more application of H\"older's inequality gives

\be\ba &\int_{\R^n} |\xi|^{\a p}  |\nabla_\gp v_p|_\gp^{p} dV_\gp=
 \int_{\R^n} |\xi|^{\a p}  |\nabla_\gp v_p|_\gp^{\a p}  |\nabla_\gp v_p|_\gp^{p(1-\a)} dV_\gp\cr&
 \le \bigg( \int_{\R^n} |\xi|^{p}  |\nabla_\gp v_p|_\gp^{p} dV_\gp\bigg)^{\a} \bigg( \int_{\R^n}   |\nabla_\gp v_p|_\gp^{p} dV_\gp\bigg)^{1-\a}\cr&
 \le \big(p^p+ 33pn\big)^\a\|v_p\|_{p,\gp}^{\a p}  \|\nabla_\gp v_p\|_\gp^{p(1-\a)}.
  \ea
\ee
\be\ba (p^p+ 33pn)^\a\|\nabla_\gp v_p\|_\gp^{p(1-\a)}&\to (1+ 33n)^\a(R_0/n)^{1-\a}<34n^{2\a-1}R_0^{1-\a},
 \ea\ee
 which proves \eqref{dp2}.
 
 \qed

From  \eqref{vnorm} and \eqref{dp1} (with $\d=\alpha$) we get
\be\ba \|\vt_p\|_{\nnn}&\ge 1-3\cdot 2^{\frac 1{p'}}n^2\rh^{-\a} \mu_p^\a 2^{\frac 1p}(8R_0)^\a \|v_p\|_{p,\gp}^{\a }\cr&\ge 1-50n^2\rh^{-\a} R_0^\a\mu_p^\a  \|v_p\|_{p,\gp}^{\a }
:=1-H_1\mu_p^\a \|v_p\|_{p,\gp}^{\a}\label{vnorm2}
\ea\ee

and from \eqref{nablavnorm2} and \eqref{dp2} 
\be\ba\|\nabla \vt_p\|_1
&\le K(n,1)^{-1}-K(n,1)^{-1+\a}B\mu_p^\a\|v_p\|_{p,\gp}^\a+32n^2\rh^{-\a}\mu_p^\a (34n^{2\a-1}R_0^{1-\a}\|v_p\|_{p,\gp}^{\alpha p})^{\frac1 p}\cr&
<K(n,1)^{-1}-K(n,1)^{-1+\a}B\mu_p^\a\|v_p\|_{p,\gp}^\a+34^2 n^{2\a+1}R_0^{1-\a}\rh^{-\a}\mu_p^\a\|v_p\|_{p,\gp}^{\alpha }\cr&=K(n,1)^{-1}\Big(1+\mu_p^\a \|v_p\|_{p,\gp}^{\alpha }\big(-B K(n,1)^{\a}+34^2 n^{2\a+1}R_0^{1-\a}K(n,1)\rh^{-\a}\big)\Big)
\cr&:=K(n,1)^{-1}\big(1+H_2 \mu_p^\a \|v_p\|_{p,\gp}^{\alpha }\big)
\label{nablavnorm3}
\ea\ee

Recall  that by \eqref{p2} and \eqref{v1}
\be \mu_p\|v_p\|_{p,g_p}\to 0,\ \ \quad \text{as}\ p\to 1^+.\ee

Combining \eqref{vnorm2} and \eqref{nablavnorm3} we get
\be\ba K(n,1)&\ge\frac{\|\vt_p\|_\nnn}{\|\nabla\vt_p\|_1}> K(n,1)\frac{ 1-H_1\mu_p^\a\|v_p\|_{p,\gp}^\a}{1+H_2\mu_p^\a\|v_p\|_{p,\gp}^\a}\cr&
=K(n,1)\bigg(1-\frac{(H_1+H_2)\mu_p^\a\|v_p\|_{p,\gp}^\a}{1+H_2 \mu_p^\a\|v_p\|_{p,\gp}^\a}\bigg)\ea
\ee
which imlies $H_1+H_2>0$ i.e.
\be\ba B&< 34^2 n^{2\a+1}R_0^{1-\a}K(n,1)^{1-\a}\rh^{-\a}+50 n^2 R_0^\a K(n,1)^\a\rh^{-\a}\cr&
=\big( 34^2 n^{3\a} R_0^{2-2\a}+50n^{2+\a}\big)\rh^{-\a}\label{B}
\ea\ee
Hence, if we take  $B$ equal to  the right-hand side of \eqref{B}, then we can find $r_0>0$ so that \eqref{sobdruet} holds for all $u\in W_0^{1,1}(B(x,r_0))$ and all $x\in M$.

\qed
\medskip
\begin{rk} We would like to highlight the fact that one cannot obtain the local $I_1^1$ inequality with the above proof, assuming only $\ric\ge K$ and $\inj(M)>0$. Indeed, assuming  that for a given $B$ the local $I_1^1$ fails, would yield a minimizing sequence $\{u_p\}$ satisfying 
\be\|u_p\|_p+B\|u_p\|_p=\lambda_p.\label{xx}\ee
However, once a $C^{0,\a}$ chart is chosen, following the above proof (especially the steps inside \eqref{nablavnorm1}), equation \eqref{xx} would yield 
\be\|\nabla \vt_p\|_1<K(n,1)^{-1}-B\mu_p\|v_p\|_{p,\gp}+C\mu_p^\a\|v_p\|_{p,\gp}^\a\ee
in place of \eqref{nablavnorm3}, which clearly does not allow the argument to work.

\vskip1em


\section{From local $I_1^\a$ to profile expansion}\label{s4}

\vskip1em

In this section we show how the local $I_1^\a$ inequality implies a first order expansion for the isoperimetric profile.

\begin{theorem}\label{expansion} Suppose that on $M$ we have $\ric\ge K$ and $\inj(M)>0$. Then, for any $\a\in(0,1)$ there exist $\tau_0,B>0$ such that  
\be \im(v)\ge nB_n^{\frac1n} v^\nn- B v^{\frac{n-1+\a}n}\qquad 0<v\le \tau_0.\label{cond1}\ee
\end{theorem}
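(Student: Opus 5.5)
The plan is to apply the local $I_1^\a$ inequality of Theorem \ref{druet} to (smooth approximations of) characteristic functions of sets of small volume. Fix $\a\in(0,1)$, and let $B',r_0$ be as in Theorem \ref{druet}. Suppose first that $E\subseteq B(x,r_0)$ for some $x\in M$. Approximating $\chi_E$ in $BV$ by functions in $W^{1,1}_0(B(x,r_0))$, with strict convergence of total variation and convergence in $L^1$ and $L^{\nnn}$ (standard, since the sets can be taken with smooth boundary), \eqref{sobdruet} gives
\[
\vol(E)^{\frac{(n-1)\a}{n}}\le K(n,1)^\a\big(\per(E)^\a+B'\vol(E)^\a\big).
\]
Since $K(n,1)^{-1}=nB_n^{1/n}$, this reads $\per(E)^\a\ge (nB_n^{1/n})^\a v^{\frac{(n-1)\a}n}-B'v^\a$ with $v=\vol(E)$. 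Setting $a=nB_n^{1/n}v^\nn$ and $b=B'v^\a$ with $b\le a^\a/2$ (true for small $v$, because $v^\a\ll v^{\frac{(n-1)\a}{n}}$), the elementary bound $(a^\a-b)^{1/\a}\ge a-C a^{1-\a}b$ gives $\per(E)\ge a-C v^{\frac{(n-1)(1-\a)}n+\a}=a-Cv^{\frac{n-1+\a}{n}}$, which is exactly \eqref{cond1} for such sets $E$.

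It therefore remains to show that, for $v$ small, $\im(v)$ is (almost) realized by sets contained in a ball of radius $r_0$. First, if an isoperimetric region $\Omega$ of volume $v$ exists in $M$, I would use the known diameter bound for isoperimetric regions of small volume under $\ric\ge K$ and noncollapsing: by a density estimate (volume lower bound for $\Omega\cap B(y,r)$ at points $y\in\partial\Omega$, derived from the isoperimetric inequality and the fact that $I_0(0^+)=1$) one gets $\diam(\Omega)\le Cv^{1/n}$. Then for $v\le\tau_0$ we have $\Omega\subseteq B(x,r_0)$, and the previous paragraph applies directly with $\per(\Omega)=\im(v)$.

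In general, isoperimetric regions need not exist on noncompact $M$. Here I would invoke the generalized existence theory (as in \cite{mfn}, \cite{apps-mathann}): a minimizing sequence splits into finitely many pieces, part of which stays in $M$ and the rest escapes to infinity and converges to isoperimetric regions in pointed $C^{0,\a}$ limit manifolds $(M_\infty,g_\infty,x_\infty)$, with $\im(v)=\sum_i I_{M_i}(v_i)$, $\sum_i v_i=v$. Each limit satisfies the same hypotheses in the limit sense, with the same $C^{0,\a}$ harmonic radius bound (\ref{h1})--(\ref{h4}). Since the proof of Theorem \ref{druet} only uses these chart bounds, and the constant $B$ in \eqref{B} depends only on $n,\a,\rh^{}$, the local inequality holds on each limit with the same $B'$ and $r_0$; alternatively, one can transplant it through the $C^{0,\a}$ convergence. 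Applying the first two steps to each piece gives $I_{M_i}(v_i)\ge nB_n^{1/n}v_i^\nn-Cv_i^{\frac{n-1+\a}n}$. Summing and using the concavity inequality $\sum v_i^\nn\ge v^\nn$ together with $\sum v_i^{\frac{n-1+\a}n}\le N v^{\frac{n-1+\a}{n}}$ (or, better, the fact that for small $v$ a single piece carries all the volume, by strict subadditivity of $v^\nn$ against the error term) yields \eqref{cond1}.

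I expect the main obstacle to be this last step: making the diameter bound and the transfer of the local $I_1^\a$ inequality to $C^{0,\a}$ limit spaces rigorous, where the metric is only Hölder continuous. In particular, one has to check that the blow-up argument, or a direct limiting argument, survives there with uniform constants.
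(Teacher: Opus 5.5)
Your plan is essentially the paper's proof. \eqref{sobdruet} applied to approximations of $\chi_\Omega$ gives \eqref{isop1} for sets of diameter at most $r_0$. The paper then quotes two facts from the literature: the small-volume dichotomy (an isoperimetric region of volume $v$ exists in $M$ or in a \emph{single} pointed $C^{0,\a}$ limit), and the bound $\diam(\Omega)\le C_0\vol(\Omega)^{\frac1n}$, valid also on the limits. It treats the limit case by the set version of your transplanting alternative: it applies \eqref{isop1} in $M$ to $\Omega_j=\Phi_j(\Omega_0)$ and passes to the limit using convergence of diameter, volume and perimeter.

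That alternative is the one to use. Rerunning the contradiction argument of Theorem \ref{druet} on each limit does not by itself give an $r_0$ uniform over all limits.

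If you keep your multi-piece decomposition, close the sum using only $v_i\le v$, i.e.\ $\sum_i v_i^{\frac{n-1}n}\big(c-Cv_i^{\frac\a n}\big)\ge\big(c-Cv^{\frac\a n}\big)v^{\frac{n-1}n}$ when $Cv^{\frac\a n}\le c$. Neither of your two routes works as written: the bound through the number $N$ of pieces needs control of $N$ uniform in $v$, which you do not have, and the strict-subadditivity heuristic cannot rule out a very small second piece.
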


\bigskip
\ni{\bf Proof.} The proof is essentially the same as the one of  Theorem 4 in \cite{mnq}, so it will be only outlined here. Basically, all we need to do from that proof is to replace $C^{1,\alpha}$ with $C^{0,\alpha}.$ 

First off, from Theorem \ref{druet} we obtain (by smooth approximation of $\chi_\Omega$)  that under the given hypothesis
there exist $B>0$ and $r_0>0$ such that for any set $\Omega\subseteq M$ of finite perimeter and with $\diam (\Omega)\le r_0$ we have
\be \per(\Omega)^\a\ge K(n,1)^{-\a} \vol(\Omega)^{\a\frac{n-1}n}-B\;\vol(\Omega)^{\a },
\label{isop0}\ee
and this easily implies 
\be \ba
 \per(\Omega)&\ge K(n,1)^{-1} \vol(\Omega)^{\frac{n-1}n}-\frac B\a K(n,1)^{\a-1}\;\vol(\Omega)^{1-\frac1n+\frac\a n}\cr&=nB_n^{\frac1n}\vol(\Omega)^{\frac{n-1}n}-B'\;\vol(\Omega)^{\frac{n-1+\a}n}.
\ea\label{isop1}\ee

 Under the given assumptions on the geometry,  there exists $v_0>0$ such that for $0<v<v_0$ at least one of the following two cases  holds:
 
\medskip
\ni(A) There exists $\Omega\subseteq M$ with finite perimeter such that 
\be \vol(\Omega)=v,\qquad \im(v)=\per(\Omega)\label{isregion}\ee

\smallskip
\ni(B) There exists a pointed $C^{0,\alpha}$ Riemannian manifold $(M_0, x_0, g_0)$ and a sequence $\{x_j\}$ on $M$ such that $(M,x_j,g)\to (M_0,x_0,g_0)$ in the $C^{0,\a}$ topology. Moreover, there exists  $\Omega_0\subseteq M_0$ with finite perimeter such that 
\be \vol_{g_0}(\Omega_0)=v,\qquad \im(v)=\per_{g_0}(\Omega_0)\label{isregion0}\ee
(see  \cite{apps-annales} Lemma 4.18).

Moreover, there exist $C_0>0$, and $v_0>0$ such that any isoperimetric region $\Omega$ satisfies  
\be \diam(\Omega)\le C_0 \vol(\Omega)^{\frac1n},\qquad {\text{ whenever }}\;\vol(\Omega)\le v_0\label{diam-vol}\ee
(see \cite{apps-annales} Prop. 4.21), valid in the general context of RCD spaces, which includes Riemannian manifolds satisfying $\ric\ge K$ and $\inj(M)>0$, and their pointed limits.

With this in mind, it's easy to find $\tau_0$ such that for $\vol(\Omega)=v\le \tau_0$: 

 1) if  (A) occurs, then  \eqref{cond1} follows  immediately from \eqref{isop1};
 
  2)  if (B) occurs, then  \eqref{cond1} follows from \eqref{isop1} applied to $\Omega_0$, which in turns follows from \eqref{isop1}  applied to the domains $\Omega_j$ in $M$ corresponding to $\Omega_0$ in the pointed $C^{0,\a}$ convergence.

To clarify this last point, recall that $(M,x_j,g)\to (M_0,x_0,g_0)$ in the $C^{0,\a}$ topology means the following: for any compact set $K\subseteq M_0$, with $x_0\in K$ there exist, up to a subsequence, compact sets $K_j\subseteq M$, $x_j\in K_j$, and $C^{1,\alpha}$ diffeomorphisms $\Phi_j:K\to K_j$ such that $\Phi_j^{-1}(x_j)\to x_0$ and $\Phi_j^* g$   converges in $C^{0,\alpha}$  to $g_0$, in the induced $C^{1,\alpha}$ complete atlas of $K$.
Take  $K=\overline {B(x_0,R)}$, where $R$ is large enough so as to contain $\Omega_0$ (which is bounded by \eqref{diam-vol}),  and let $\Omega_j=\Phi_j(\Omega_0)$. Then,
\be \diam(\Omega_j)  \to \diam(\Omega_0),\qquad \vol(\Omega_j)\to \vol_{g_0}(\Omega_0),\qquad \per(\Omega_j)\to \per_{g_0}(\Omega_0),\label{seq}\ee
which follow as in \cite{nardulli-calcvar} proof of  Lemma 3.3.  Choosing $\tau_0$ small enough, we then have $\diam(\Omega_j)$ small enough so that 
\be \per(\Omega_j)\ge nB_n^{\frac1n} \vol(\Omega_j)^{\frac{n-1}n}-B'\;\vol(\Omega_j)^{\frac{n-1+\a} n},
\ee
and passing to the limit as $j\to\infty$ we obtain \eqref{isop1} applied to $\Omega_0$.

\qed

\vskip1em
\section{From  profile expansion to small volumes inequalities}\label{s5}

\vskip1em

In this section we show how a first order expansion of the isoperimetric profile for small volumes, implies the sharp Sobolev inequality for small volumes.

 We will say that given $A>0$ the  \emph{Sobolev inequality with constant $A$ holds for small volumes on $M$}, if, given $p,q$ in \eqref{par} and $\a>0$, there are $\tau>0$ and  $B\ge 0$ such that for for any open set $\Omega$ with $\vol(\Omega)\le \tau$, and for all $u\in W_0^{1,p}(\Omega)$, we have
\be \|u\|_q^\a\le A\|\nabla u\|_p^\a+B\|u\|_p^\a.\label{sob}\ee
We will say that the \emph{Sobolev inequality with constant $A$ holds globally on $M,$} if there is $B\ge0$ such that \eqref{sob} holds for all $u\in W^{1,p}(M).$

Likewise, for given  $k=1,2$, $k<n$,  and $\gamma>0$ we will say that the   \emph{Moser-Trudinger inequality with constant $\gam$ holds for small volumes on $M$}, if for any $\kappa>0$ there is $C>0$ such that for any open set $\Omega$ with $\vol(\Omega)\le \tau$, and 
\be\bc {\text {for all}} \; u\in W_0^{1,n}(\Omega) & {\text {if}} \;k=1 \cr\cr
{\text{for all}} \; u\in  W_0^{1,2}(\Omega)\cap W^{2,\frac n 2}(\Omega) & {\text{if}} \; k=2 \ec\ee
 with 
\be \|\nabla^k u\|_{n/k}\le1,\label{mtl}\ee
we have 
\be  \int_{\Omega} \exp_{\llceil\frac{n-k}{k}-1\rrceil}{\left(\gam|u|^{\frac{n}{n-k}}\right)}d\mu\le C. \label{MTL}\ee

 We will say that the   \emph{Moser-Trudinger inequality with constant $\gam$ holds globally on $M$}, 
if there is $C>0$ such that for all $u\in W^{k,\frac nk}(M)$, estimate  \eqref{MTL} holds with $\Omega=M$, under \eqref{ruf}.

\begin{rk}\label{sobspace} We observe that for all $n\ge2$  we have $C_c^\infty(\Omega)\subseteq  W_0^{1,2}(\Omega)\cap W^{2,\frac n2}(\Omega)$, and for $n\ge4$ we have $W_0^{2,\frac n 2}(\Omega)\subseteq W_0^{1,\frac n2}(\Omega)\cap W^{2,\frac n 2}(\Omega)\subseteq W_0^{1, 2}(\Omega)\cap W^{2,\frac n 2}(\Omega)$. When $n=3$ 
 in general $W_0^{2,\frac 3 2}(\Omega)\not\subseteq W_0^{1, 2}(\Omega)\cap W^{2,\frac 3 2}(\Omega)$,
unless $\Omega$ has smooth boundary (in which case $ W_0^{1,\frac 32}(\Omega)\cap W^{2,\frac 3 2}(\Omega)= W_0^{1,2}(\Omega)\cap W^{2,\frac 32}(\Omega)$).

\end{rk}

\begin{theorem}\label{smallprofile-sobolev}
Assume that on a  complete Riemannian manifold $I_0(0^+)=1$, and  there exist constants $\tau_0>0$, $B\ge0$, $0<b<1$ such that 
\be I_0(v)\ge 1- B v^{\frac bn},\qquad 0<v\le \tau_0.\label{cond}\ee
Then for any $a\in(0,b)$ the Sobolev inequality \eqref{sob} with constant $K(n,p)^a$ holds for small volumes on $M$, in particular, there is $\tau\in (0,\tau_0]$ such that for any $\Omega$ open with $\vol(\Omega)\le \tau$
\be \|u\|_q^a\le K(n,p)^a\|\nabla u\|_p^a+B_1\|u\|_p^a,\qquad u\in W_0^{1,p}(\Omega)\label{sob3} \ee
where 
\be B_1=8B\tau^{\frac{b-a}{n}}+8BB_n^{\frac 1n}\Big(\frac{1-a}{b-a}\Big)^{\frac{1-a}n}\tau^{\frac{b-a}n}K\Big(n,\frac{qn}{an+q}\Big).\ee
\end{theorem}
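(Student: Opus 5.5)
We reduce \eqref{sob3} to a one-dimensional statement about radial decreasing functions on $\R^n$ through the generalized P\'olya--Szeg\H o inequality \eqref{ps}. Take $u\in C_c^\infty(\Omega)$, $\vol(\Omega)\le\tau\le\tau_0$, and write $u^\#$ for its Schwarz symmetrization, supported in $\Omega^\#=B_{\sR^n}(0,R)$ with $B_nR^n\le\tau$. Equimeasurability gives $\|u\|_q=\|u^\#\|_q$ and $\|u\|_p=\|u^\#\|_p$. From \eqref{cond}, $I_0(B_n|\xi|^n)^{-1}\le 1+2B B_n^{\frac bn}|\xi|^b$ for $\tau$ small. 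Hence
\[
\|\nabla u^\#\|_{p}\le \Big(\int_{\Omega^\#} I_0^p|\nabla u^\#|^p\Big)^{1/p}+2BB_n^{\frac bn}\Big(\int_{\Omega^\#}|\xi|^{bp}|\nabla u^\#|^p\Big)^{1/p}\cdot(\text{const}),
\]
and this is at most $\|\nabla u\|_p+2BB_n^{\frac bn}\|\,|\xi|^b\nabla u^\#\|_p$, if we absorb $I_0\le$ something via \eqref{ps}. A cleaner route is to split instead as $|\nabla u^\#|\le I_0|\nabla u^\#|+(1-I_0)|\nabla u^\#|/I_0$. Applying the Euclidean sharp Sobolev inequality to $u^\#$ then gives
\[
\|u\|_q\le K(n,p)\|\nabla u\|_p+C B\,K(n,p)\,\big\||\xi|^b\nabla u^\#\big\|_{L^p(\Omega^\#)}.
\]
Raising to the power $a<1$ and using subadditivity $(x+y)^a\le x^a+y^a$ reduces the problem to bounding $\big(\||\xi|^b\nabla u^\#\|_p\big)^a$ by a small multiple of $\|u\|_p^a$ plus a harmless multiple of $\|\nabla u\|_p^a$.

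The weighted gradient term is the crux. Write $|\xi|^b=|\xi|^{b-a}|\xi|^a\le R^{b-a}|\xi|^a$; this is where the factor $\tau^{\frac{b-a}n}$ in $B_1$ appears. We then need an inequality of the form $\||\xi|^a\nabla u^\#\|_p\lesssim$ a combination of $\|\nabla u^\#\|_p^{1-a}\|u^\#\|_p^{a}$ and $\|u\|_p$. This should not be attempted directly, since a gradient with a weight cannot be controlled by $\|u\|_p$. Instead, we avoid differentiating and interpolate at the level of the norms. Consider the exponent $s=\frac{qn}{an+q}$, which appears in $B_1$ through $K(n,s)$. It satisfies $\frac1s=\frac1q+\frac an$, i.e. it is the Sobolev exponent with $s^*$ chosen so that Hölder with the radial weight $|\xi|^{-a}\in L^{n/a,\infty}$ works. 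So the plan is to handle the correction on $\|u\|_q$ rather than on the gradient. Concretely, we write $\|u^\#\|_q\le K(n,p)\|I_0\nabla u^\#\|_p+\text{(error)}$, and estimate the error via the one-dimensional representation $u^*(t)=\int_t^{\tau}(-u^{*\prime})$. The defect $1-I_0(t)\le Bt^{b/n}$ is moved onto $u^*$ by an integration by parts in $t$: $t^{b/n}|u^{*\prime}|$ integrates to $u^*$-type terms with weight $t^{b/n-1}$. This is bounded by Hölder against $\|u\|_p$ together with an $L^s$-type Sobolev bound using $K(n,s)$, interpolated between $p$ and $q$ with weight exponents giving the constant $\big(\frac{1-a}{b-a}\big)^{\frac{1-a}n}$. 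That constant comes from the integral $\int_0^\tau t^{\frac{b-a}{n(1-a)}-1}dt$.

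The final assembly proceeds as follows. After these bounds we get $\|u\|_q\le K(n,p)\|\nabla u\|_p+\epsilon(\tau)\|u\|_q^{1-a}\|u\|_p^{a}$-type terms with $\epsilon(\tau)=O(\tau^{\frac{b-a}n})$. Young's inequality, and raising to the power $a$ with $(x+y)^a\le x^a+y^a$, then yield \eqref{sob3} with $B_1$ of the stated form; the factor $8$ absorbs the constants. Density extends the result from $C_c^\infty(\Omega)$ to $W_0^{1,p}(\Omega)$. The main obstacle is the middle step: turning the profile defect $Bv^{b/n}$, which multiplies the gradient, into a term controlled by $\|u\|_p^a$ with only the power $a<b$. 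I would first try the one-dimensional weighted Hardy/Hölder argument on $u^*$ described above, with $a<b$ giving the integrability.
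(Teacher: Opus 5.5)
There is a genuine gap. The step that turns the profile defect into a term controlled by $\|u\|_p^a$ \emph{without touching the coefficient} $K(n,p)^a$ is never carried out, and the tool you name for it is misidentified. For $s=\frac{qn}{an+q}$ one has $\frac1s=\frac aq+\frac1n$, not $\frac1q+\frac an$, so the Sobolev conjugate of $s$ is $q/a$. Thus $K(n,s)$ is the constant of $W^{1,s}\hookrightarrow L^{q/a}$, and it can only enter if the error is measured for $(u^\#)^a$ in $L^{q/a}$.

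That is the idea you are missing. In the paper, set $v(\xi)=\int_{|\xi|}^R I_0(B_n\rho^n)(-\partial_\rho u^\#)\,d\rho$, so that $\|v\|_q\le K(n,p)\|\nabla u\|_p$ by \eqref{ps}, and $u^\#\le v+4BB_n^{\frac bn}\int_{|\xi|}^R\rho^b(-\partial_\rho u^\#)\,d\rho$. The paper applies $(x+y)^a\le x^a+ax^{a-1}y$ \emph{pointwise}, integrates by parts, and uses $v\ge\frac12u^\#(\xi)\ge\frac12u^\#(\rho\xi^*)$ to bound $v^{a-1}$. This gives $(u^\#)^a\le v^a+8BB_n^{\frac bn}|\xi|^b(u^\#)^a+8BB_n^{\frac bn}F$, with $F=\int_{|\xi|}^R\rho^{b-1}(u^\#)^a\,d\rho$. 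Minkowski in $L^{q/a}$ then isolates $\|v\|_q^a$ with coefficient one. The decay bound $u^\#(\xi)\le B_n^{-1/p}|\xi|^{-n/p}\|u\|_p$ controls the middle term. The embedding $W^{1,s}\hookrightarrow L^{q/a}$ applied to $F$, followed by H\"older, controls the last term; the integral $\int_{B(0,R)}|\xi|^{-\frac{1-b}{1-a}n}d\xi$ is what produces $(\frac{1-a}{b-a})^{\frac{1-a}n}$. Your phrase ``H\"older against $\|u\|_p$ together with an $L^s$-type Sobolev bound'' does not actually bound $\|t^{b/n}u^*\|_q$ or $\|\int_t^\tau w^{b/n-1}u^*(w)\,dw\|_q$. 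So neither your intermediate inequality nor your claim that $B_1$ comes out ``of the stated form'' is justified.

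Your closing step also fails as written. Write $X=\|u\|_q$, $Y=K(n,p)\|\nabla u\|_p$, $Z=\|u\|_p$, so you have $X\le Y+\epsilon Z^aX^{1-a}$. Any fixed-parameter Young inequality puts $\delta X$ or $\delta X^a$ on the right, and absorbing it multiplies $K(n,p)^a$ by a factor $>1$, which is exactly what the theorem forbids. The same objection applies to your first attempt, where you allow ``a harmless multiple of $\|\nabla u\|_p^a$'': no such multiple is harmless. The correct move is: if $X>Y$, divide by $X^{1-a}$ and use $YX^{a-1}\le Y^a$, which gives $X^a\le Y^a+\epsilon Z^a$.

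With that fix, your norm-level route can be completed, but only with ingredients absent from your sketch:
\begin{itemize}
\item after integrating by parts, Hardy's inequality gives $\|\int_{|\xi|}^R\rho^{b-1}u^\#\,d\rho\|_q\le\frac qn\||\xi|^bu^\#\|_q$;
\item H\"older plus the decay bound give $\||\xi|^bu^\#\|_q\le R^{b-a}\||\xi|u^\#\|_q^{a}\|u\|_q^{1-a}\le R^{b-a}B_n^{-\frac an}\|u\|_p^a\|u\|_q^{1-a}$.
\end{itemize}
This yields \eqref{sob3} with $B_1=4B(1+\frac{bq}n)\tau^{\frac{b-a}n}$. That is a valid constant, but not the one in the statement, since $K(n,\frac{qn}{an+q})$ never appears along this route.
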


\begin{rk}\label{I0} The condition $I_0(0^+)=1$ is only used to bound $I_0(v)$ from above for small~$v$. We stated it this way since it holds under reasonable hypothesis such as $\ric\ge K$, and noncollapsing volumes (see Section \ref{s2}).

\smallskip
\ni {\bf Proof.}
 Let us pick $\tau\le \tau_0$ so small so that 
\be 1-B\tau^{\frac bn}\ge  \frac12,\qquad   I_0(v)\le 2,\qquad {\text {for}} \;\;0\le v\le \tau.\label{tt}\ee
It is enough to show \eqref{sob3} for  $u\in C_c^\infty(\Omega)$, with $\Omega$ open and $\vol(\Omega)\le \tau.$ 

Since $u\in C_c^\infty(\Omega)$, we have that  $u^\#$ is Lipschitz, and supported on  on $\Omega^\#\subseteq B(0,R)$, where $B_n R^n=\tau$. Hence,
\be\ba &u^\#(\xi)=\int_{|\xi|}^R (-\partial_\rho) u^\#(\rho\xi^*)d\rho=\int_{|\xi|}^RI_0(B_n\rho^n)^{-1}I_0(B_n\rho^n) (-\partial_\rho) u^\#(\rho\xi^*)d\rho\cr&\le \int_{|\xi|}^R(1+2BB_n^{\frac bn}\rho^b) I_0(B_n\rho^n) (-\partial_\rho) u^\#(\rho\xi^*)d\rho
\le  v(\xi)+4BB_n^{\frac bn}\int_{|\xi|}^R \rho^b (-\partial_\rho) u^\#(\rho\xi^*)d\rho\label{repformula1}
\ea
\ee
where we set $\xi=|\xi|\xi^*$ and 

\be v(\xi)=\int_{|\xi|}^R I_0(B_n\rho^n)(-\partial_\rho)u^\#(\rho\xi^*)d\rho.\ee

Note that 
\be\ba u^\#(\xi)^a&\le \bigg(v(\xi)+4BB_n^{\frac bn}\int_{|\xi|}^R \rho^b (-\partial_\rho) u^\#(\rho\xi^*)d\rho\bigg)^a\cr
&\le v(\xi)^a+4aBB_n^{\frac bn}v(\xi)^{a-1}\int_{|\xi|}^R \rho^b (-\partial_\rho) u^\#(\rho\xi^*)d\rho\cr
&=v(\xi)^a+4aBB_n^{\frac bn}v(\xi)^{a-1}|\xi|^b u^\#(\xi)+4aBB_n^{\frac bn}v(\xi)^{a-1}\int_{|\xi|}^R \rho^{b-1} u^\#(\rho\xi^*)d\rho
\ea
\ee
It is clear that
\be v(\xi)\geq \frac 12 u^\#(\xi)\ge  \frac 12 u^\#(\rho \xi^*),\qquad |\xi|\le\rho\le R \ee
so we get
\be\ba u^\#(\xi)^a&\le v(\xi)^a+8BB_n^{\frac bn}|\xi|^b u^\#(\xi)^a+8BB_n^{\frac bn}\int_{|\xi|}^R  \rho^{b-1}u^\#(\rho\xi^*)^a d\rho\cr
\label{ua1}
\ea\ee
From the \PS inequality \eqref{ps} we  have
\be\ba \int_{B(0,R)}|\nabla v(\xi)|^p d\xi&=\int_{B(0,R)}I_0(B_n|\xi|^n)^p|\nabla u^\#(\xi)|^p d\xi \le\int_{\Omega}|\nabla u(x)|^p d\mu(x).\label{psv}\ea\ee
From Minkowski's inequality and \eqref{ua1}
\be\ba\|u\|_q^a&=\bigg(\int_{B(0,R)} \big(u^\#(\xi)^a\big)^{\frac qa}d\xi \bigg)^{\frac aq}\le\bigg(\int_{B(0,R)}v(\xi)^qd\xi \bigg)^{\frac aq}+8BB_n^{\frac bn}\bigg(\int_{B(0,R)}|\xi|^{\frac{bq}{a}} u^\#(\xi)^qd\xi \bigg)^{\frac aq}\cr&+8BB_n^{\frac bn}\Bigg(\int_{B(0,R)} \bigg(\int_{|\xi|}^R \rho^{b-1}u^\#(\rho\xi^*)^ad\rho\bigg)^{\frac qa} d\xi \Bigg)^{\frac aq}:=I+II+III\ea\ee
From \eqref{psv} and the sharp Sobolev inequality on $\R^n$ we have
\be I=\|v\|_q^a\le K(n,p)^a\|\nabla v\|_p^a\le K(n,p)^a\|\nabla u\|_p^a.\label{I}\ee

To estimate the second term, first note that $u^\#(\xi)\le \|u\|_p B_n^{-\frac1 p}|\xi|^{-\frac n p }$, since for any $\xi\in B(0,R)$
\be \|u\|_p\ge \bigg(\int_{B(0,|\xi|)}u^\#(\eta)^pd\eta\bigg)^{\frac 1p}\ge u^\#(\xi) (B_n|\xi|^n)^{\frac 1p}.\ee
Hence,
\be\ba&\int_{B(0,R)}|\xi|^{\frac{bq}{a}} u^\#(\xi)^qd\xi =\int_{B(0,R)}|\xi|^{\frac{bq}{a}} u^\#(\xi)^{q-p}u^\#(\xi)^pd\xi 
\cr&\le \int_{B(0,R)}|\xi|^{\frac{bq}{a}} \big(\|u\|_pB_n^{-\frac1p}|\xi|^{-\frac np}\big)^{q-p}u^\#(\xi)^pd\xi \cr& =B_n^{1-\frac q p}\|u\|_p^{q-p}\int_{B(0,R)}|\xi|^{\frac{bq}{a}-\frac {nq} p+n} u^\#(\xi)^pd\xi =B_n^{-\frac q n}\|u\|_p^{q-p}\int_{B(0,R)} |\xi|^{\frac{(b-a)q}{a}}u^\#(\xi)^pd\xi\cr&\le B_n^{-\frac q n}R^{\frac{(b-a)q}{a}}\|u\|_p^q = B_n^{-\frac q n-\frac{(b-a)q}{na}}\tau^{\frac{(b-a)q}{na}}\|u\|_p^q\ea\ee
so that 
\be II\le 8BB_n^{\frac bn}B_n^{-\frac {a}{n}-\frac{(b-a)}{n}}\tau^{\frac{b-a}{n}} \|u\|_p^a=
8B\tau^{\frac{b-a}{n}} \|u\|_p^a.\ee

\smallskip
To estimate $III$, we observe that the function on $\R^n$ 
\be F(\xi)=\int_{|\xi|}^R  \rho^{b-1}\big(u^\#(\rho \xi^*)\big)^ad\rho\ee
is in $W_0^{1,p}(B(0,R))$, and $|\nabla F|=|\xi|^{b-1}(u^\#)^a$ (note that $u^\#(\rho \xi^*)$ only depends on $\rho$), hence from the Sobolev inequality \eqref{I} applied to $F$  we get 
\be III=  8BB_n^{\frac bn}\|F\|_{\frac qa}\le  8BB_n^{\frac bn}K\Big(n,\frac{qn}{an+q}\Big)\|\nabla F\|_{\frac{qn}{an+q}},\label{sobF}\ee
and
\be\ba & \|\nabla F\|_{\frac{qn}{an+q}}=\bigg(\int_{B(0,R)} \Big(|\xi|^{b-1}u^\#(\xi)^a\Big)^{\frac{qn}{an+q}}d\xi\bigg)^{\frac{an+q}{qn}}\cr
&\le\bigg(\int_{B(0,R)} u^\#(\xi)^{a\frac{qn}{an+q}\frac{an+q}{(q+n)a}}d\xi\bigg)^{\frac{an+q}{qn}\frac{(q+n)a}{an+q}}
\bigg(\int_{B(0,R)} |\xi|^{(b-1)\frac{qn}{an+q}\frac{an+q}{(1-a)q}}d\xi\bigg)^{\frac{an+q}{qn}\frac{(1-a)q}{an+q}}\cr
&=\|u^\#\|_p^a \bigg(\int_{B(0,R)} |\xi|^{-\frac{1-b}{1-a}n}d\xi\bigg)^{\frac{1-a}{n}}=B_n^{\frac{1-a}n}\Big(\frac{1-a}{b-a}\Big)^{\frac{1-a}n}R^{b-a}\|u\|_p^a\ea\ee
and finally
\be\ba  III&\le  8BB_n^{\frac bn}B_n^{\frac{1-a}n}\Big(\frac{1-a}{b-a}\Big)^{\frac{1-a}n}R^{b-a}K\Big(n,\frac{qn}{an+q}\Big)\|u\|_p^a\cr& = 8BB_n^{\frac 1n}\Big(\frac{1-a}{b-a}\Big)^{\frac{1-a}n}\tau^{\frac{b-a}n}K\Big(n,\frac{qn}{an+q}\Big)\|u\|_p^a,
\ea
\ee 
which concludes the proof.

\qed

Here is the analogue of Theorem \ref{smallprofile-sobolev} in the context of Moser-Trudinger inequalities.

\bigskip
\begin{theorem}\label{smallprofile-MT} Assume that on a  complete Riemannian manifold $I_0(0^+)=1$, and  there exist constants $\tau_0>0$, $B\ge0$,  $0<a<1$ such that 
\be I_0(v)\ge 1- B v^{\frac an},\qquad 0<v\le \tau_0.\label{cond2}\ee
Then, for $k=1,2$ the Moser-Trudinger inequality with constant $\gamma_{n,k}$ holds for small volumes.
\end{theorem}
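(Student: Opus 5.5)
The plan is to reduce both cases to one–dimensional Adams-type inequalities for the decreasing rearrangement $u^*$, exploiting that by \eqref{cond2} the weight $I_0(v)^{-1}$ (resp. $I_0(v)^{-2}$) equals $1+O(v^{a/n})$ for $v\le\tau_0$. The Euclidean model is then perturbed by a term that is integrable near $v=0$. I first choose $\tau\le\tau_0$ as in \eqref{tt}, so that $\frac12\le I_0\le 2$ on $(0,\tau]$ and $I_0(v)^{-1}\le 1+2Bv^{a/n}$. It suffices to treat $u\in C_c^\infty(\Omega)$ with $\vol(\Omega)\le\tau$; for $k=2$ I approximate the data $f=-\Delta u$ instead.

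\emph{Case $k=1$.} Writing $u^*(t)=\int_t^{\vol(\Omega)}(-u^{*\prime}(s))\,ds$ and changing variables, I set
\[
\phi(s)=nB_n^{1/n}s^{\frac{n-1}n}I_0(s)(-u^{*\prime}(s)).
\]
By the P\'olya–Szeg\H o inequality \eqref{ps} with $p=n$, this gives $\int_0^{\vol(\Omega)}\phi^n\,ds\le\|\nabla u\|_n^n\le1$. Then
\[
u^*(t)\le (nB_n^{1/n})^{-1}\int_t^{\vol(\Omega)}\phi(s)\,s^{-\frac{n-1}n}\big(1+2Bs^{a/n}\big)\,ds .
\]
This is the Euclidean kernel plus the kernel $2B\,s^{-\frac{n-1}{n}+\frac an}$. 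The perturbation is in $L^{n'}(0,\tau)$ and its norm shrinks as $\tau\to 0$. Equivalently, after the substitution $t=\vol(\Omega)e^{-x}$, the kernel equals $1$ plus a term whose $L^{n'}$ norm in the variable $s$ is finite and uniform in $t$.

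I then invoke the 1-dimensional Adams lemma (Adams' lemma in the form used by Cianchi, or by Fontana–Morpurgo). Let $a(s,t)$ be the kernel, with $a(s,t)\le 1$ for $0<s<t$ and
\[
\sup_t\Big(\int_{-\infty}^0+\int_t^\infty\Big)a(s,t)^{n'}\,ds\le C.
\]
Then $\int_0^\infty e^{-F(t)}\,dt\le C'$, where $F(t)=t-\big(\int a\phi\big)^{n'}$ and $\|\phi\|_n\le1$. The kernel coming from $1+2Bs^{a/n}$ satisfies the bound $a\le 1+\epsilon(s)$, with $\epsilon\in L^{n'}$ decaying exponentially in the variable $x$. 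The standard trick $(A+E)^{n'}\le A^{n'}+C A^{n'-1}E$, together with the boundedness of $E$ by Hölder, shows that the exponent differs from the sharp one by at most $C(1+A^{n'-1}e^{-cx})$. This is absorbed by the variant of Adams' lemma that allows $a\le 1+O(e^{-cx})$ on the diagonal region; I expect this variant to be the main technical point to check.

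The result is
\[
\int_0^{\vol(\Omega)}\exp\big(\gamma_{n,1}u^*(t)^{n'}\big)\,dt\le C\vol(\Omega),
\]
and \eqref{int1} transfers this to $\Omega$. Subtracting the polynomial part only improves matters, since the regularized exponential is bounded by the full one on the small-volume set. The bound is uniform since $\vol(\Omega)\le\tau$.

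\emph{Case $k=2$.} Let $f=-\Delta u\in L^{n/2}$ with $\|f\|_{n/2}\le1$. The generalized Talenti estimate \eqref{estlap} gives
\[
u^*(t)\le n^{-2}B_n^{-2/n}\int_t^{\vol\Omega}v^{-2+\frac2n}\big(1+Cv^{a/n}\big)\int_0^vf^*\,dw\,dv .
\]
Exchanging the order of integration writes $u^*(t)=\int_0^{\vol\Omega}k(t,w)f^*(w)\,dw$, with
\[
k(t,w)=n^{-2}B_n^{-2/n}\int_{\max(t,w)}^{\vol\Omega}v^{-2+2/n}\big(1+Cv^{a/n}\big)\,dv .
\]
For $w\le t$ this is
\[
\frac{t^{-1+2/n}}{n(n-2)B_n^{2/n}}\big(1+O(t^{a/n})\big),
\]
the precise Euclidean kernel (Adams/Tarsi) times $1+O(t^{a/n})$, with a corresponding bounded perturbation for $w>t$. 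After the same exponential change of variables, the Euclidean kernel yields the constant $\gamma_{n,2}$ by the classical computation. The multiplicative perturbation is handled exactly as in the case $k=1$.

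The main obstacle in both cases is verifying that the $v^{a/n}$ errors produce only integrable (exponentially decaying in the log variable) corrections to the Adams kernel, so that the sharp constant is preserved. I will prove a small version of Adams' lemma with kernel $a\le 1+Ce^{-cx}$ for this purpose.
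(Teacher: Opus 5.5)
Your proposal is correct and follows essentially the paper's route. The generalized P\'olya--Szeg\H{o} inequality \eqref{ps} for $k=1$ and the generalized Talenti estimate \eqref{estlap} for $k=2$ reduce $u^*$ to a one-dimensional integral operator whose kernel has rearrangements bounded by $\gamma_{n,k}^{-\frac{n-k}{n}}s^{-\frac{n-k}{n}}(1+Cs^{a/n})$; your Fubini kernel depending on $\max(t,w)$ correctly captures both terms of the Talenti bound. The sharp constant then comes from an Adams-type lemma that tolerates this multiplicative error.

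The only difference is that the Adams variant you plan to prove is what Theorem~1 of \cite{fm} provides, and the paper simply invokes it. Keep the perturbation inside the kernel: for $k=1$ it is a factor $1+2Bs^{a/n}$ in the integration variable, not a decay in $t$. The additive bounded-error estimate you sketch along the way only bounds the error in the exponent by a multiple of $(\log(\vol(\Omega)/t))^{1/n}$, so on its own it would not preserve $\gamma_{n,1}$.
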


\ni{\bf Proof.} Let us pick $\tau\le \tau_0$ so small so that 
\be 1-B\tau^{\frac an}\ge  \frac12,\qquad   I_0(v)\le 2 \quad {\text {for}} \;\;0\le v\le \tau.\label{tt}\ee

For  $k=1$ it is enough to prove the statement for  $u\in C_c^\infty(\Omega)$, with $\vol(\Omega)\le \tau$, and such that \be \|\nabla u\|_n\le 1.\ee

As in  \eqref{repformula1}
\be u^\#(\xi)\le \int_{|\xi|}^R(1+2BB_n^{\frac an}\rho^a) I_0(B_n\rho^n) (-\partial_\rho) u^\#(\rho\xi^*)d\rho,\quad |\xi|\le R,
\label{repformula2}
\ee
which implies, for any $\xi^*\in S^{n-1}$, and $0<t<\tau$,
\be\ba u^*(t)=u^\#\big(B_n^{-\frac1n} t^{\frac1n} \xi^*\big)&\le 
 \int_t^{\tau}B_n^{-\frac1n}n^{-1}v^{-\frac{n-1}n}\big(1+2Bv^{\frac an}\big) I_0(v)|\nabla u^\#(B_n^{-\frac1n} v^{\frac1n}\xi^*)|dv\cr&:=\int_0^{\tau} k(t,v) f(v)dv
 \ea
\ee
where
\be k(t,v)=\begin{cases}B_n^{-\frac1n}n^{-1}v^{-\frac{n-1}n}\big(1+2Bv^{\frac an}\big) & {\text{if }} t<v\le \tau\cr 0 & \text{otherwise}\end{cases}
\ee
and 
\be f(v)=I_0(v)|\nabla u^\#(B_n^{-\frac1n}v^{\frac1n}\xi^*)|.\ee
It is easy to check that for $0<s\le \tau$
\be\ba\sup_{0<t\le \tau} \big(k(t,\cdot)\big)^*(s)&=\sup_{0<v\le \tau} \big(k(\cdot,v)\big)^*(s)\cr&=B_n^{-\frac1n}n^{-1} s^{-\frac{n-1}n}\big(1+2Bs^{\frac an}\big)=\gamma_{n,1}^{-\frac{n-1}n} s^{-\frac{n-1}n}\big(1+2Bs^{\frac an}\big)\ea
\ee
and also, using \eqref{ps},
\be \ba \int_0^{\tau} f(v)^ndv &=\int_0^{\tau}I_0(v)^n|\nabla u^\#(B_n^{-\frac1n}v^{\frac1n}\xi^*)|^ndv\cr&=\int_{B(0,R)}I_0(B_n|\xi|^n)^n |\nabla u^\#(\xi)|^nd\xi\le  \int_M |\nabla u(x)|^n d\mu(x)\le 1\ea\ee

The desired result follows now from  the general Theorem 1 in \cite{fm}, applied  in the context of the 1-dimensional measure space $([0,\tau], m)$, to the operator 
\be Tf(t)=\int_0^{\tau} k(t,v) f(v)dv.\ee

Now let $k=2<n$,  $u\in W_0^{1,2}(\Omega)\cap W^{2,n/2}(\Omega)$, with $\vol(\Omega)=\tau\le\tau_0$, satisfying \eqref{tt}, and assume
\be \|\Delta u\|_{n/2}\le 1.\ee 
If $f=\Delta u\in L^{n/2}(\Omega)\subseteq L^{2n\over n+2}(\Omega)$, then 
using Talenti's generalized estimate  \eqref{estlap}, \eqref{cond2},  \eqref{tt}, Fubini's theorem, and some elementary estimates, we obtain
\be\ba u^*(t)&\le  n^{-2}B_n^{-\frac2 n}\int_t^{\tau} v^{-2+\frac 2n}(1+2B v^{\frac an}) \int_0^v f^*(w)dw \;dv\cr&\le\frac{B_n^{-\frac2 n}}{n(n-2)}t^{-1+\frac2n}(1+C t^{\frac an})
\int_0^t f^*(w)dw\cr&\qquad + \frac{B_n^{-\frac2 n}}{n(n-2)}\int_t^\tau f^*(w)w^{-1+\frac2n}(1+Cw^{\frac an})dw
\ea
\ee
for some $C>0$. Noting that $\gamma_{n,2}=n(n-2)^{\frac n{n-2}}\omega_{n-1}^{\frac2{n-2}}=\big(n(n-2)\big)^{\frac n{n-2}}B_n^{\frac2{n-2}}$, and letting 
\be k(t,v)=\begin{cases}\gamma_{n,2}^{-\frac{n-2}n}v^{-\frac{n-2}n}\big(1+Cv^{\frac an}\big) & {\text{if }} t<v\le \tau\cr 0 & \text{otherwise}\end{cases}
\ee
we get 
\be u^*(t)\le \int_0^\tau k(t,v)f^*(v)dv,\qquad 0<t< \tau,\ee
so  we can once again apply Theorem 1 in \cite{fm} to conclude.

\qed

\vskip1em


\section{From small volumes to global inequalities}\label{s6}

\smallskip
\begin{theorem}\label{loc-glob1} Given $p,q$ as in \eqref{par}, $\a>0$,  and $A>0$, if the Sobolev inequality with constant $A$ holds for small volumes on $M$, then it holds globally on $M.$

\end{theorem}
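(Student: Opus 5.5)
We plan to prove Theorem~\ref{loc-glob1} by a partition of unity subordinate to a uniformly locally finite covering by small sets, and then to glue the local inequalities. Under the standing geometric assumptions ($\ric\ge K$, $\inj(M)>0$) we have uniform lower bounds $\vol(B(x,r))\ge c(r)>0$ and uniform upper bounds $\vol(B(x,r))\le C(r)$ (Bishop--Gromov). So we fix $\rho>0$ with $\sup_x\vol(B(x,2\rho))\le\tau$. We take a maximal $\rho$-separated family $\{x_i\}$. The balls $B(x_i,\rho)$ cover $M$, the balls $B(x_i,\rho/2)$ are disjoint, and by volume comparison each point lies in at most $N$ of the balls $B(x_i,2\rho)$, with $N$ independent of the point. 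We choose Lipschitz functions $\eta_i$ supported in $B(x_i,2\rho)$ with $\sum_i\eta_i^{p}\equiv1$ and $|\nabla\eta_i|\le C_\rho$. For instance, we take $\eta_i=\psi_i/(\sum_j\psi_j^p)^{1/p}$, where $\psi_i$ is a cutoff equal to $1$ on $B(x_i,\rho)$.

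For $u\in C_c^\infty(M)$ (which suffices by density) we write $|u|^q=\sum_i|\eta_iu|^q\cdot(\text{weights})$. The cleanest route uses the normalization $\sum\eta_i^p=1$ and $q>p$:
\[
\|u\|_q^p=\|u^p\|_{q/p}=\Big\|\sum_i(\eta_iu)^p\Big\|_{q/p}\le\sum_i\|\eta_iu\|_q^p .
\]
We apply the small-volume inequality \eqref{sob} to $\eta_iu\in W_0^{1,p}(B(x_i,2\rho))$. When $\a\le p$, we raise it to the power $p/\a\ge1$ and use $(X+Y)^{s}\le(1+\e)X^s+C_\e Y^s$. Summing over $i$ gives
\[
\|u\|_q^p\le(1+\e)A^{p/\a}\sum_i\|\nabla(\eta_iu)\|_p^p+C_\e\sum_i\|\eta_iu\|_p^p .
\]
Next, $\sum_i\|\eta_iu\|_p^p=\|u\|_p^p$ and $|\nabla(\eta_iu)|^p\le(1+\e)\eta_i^p|\nabla u|^p+C_\e|\nabla\eta_i|^p|u|^p$. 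Bounded overlap then yields $\sum_i\|\nabla(\eta_iu)\|_p^p\le(1+\e)\|\nabla u\|_p^p+C'_\e\|u\|_p^p$. This gives $\|u\|_q^p\le(1+\e)^2A^{p/\a}\|\nabla u\|_p^p+C\|u\|_p^p$, which is the global inequality with exponent $p$ and constant slightly worse than $A$.

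\textbf{Main obstacle.} Two losses remain, and removing them is the real difficulty. The first is the factor $(1+\e)$ in front of $A$, which is not allowed since the constant $A$ must be kept exactly. The second is the passage from exponent $p$ back to $\a$ (when $\a<p$ one can pass from $I^p$ to $I^\a$ only with loss; as noted in the introduction, the monotonicity goes from larger to smaller exponent). To avoid the $\e$ we would follow the device of \cite{mnq}, Theorem~2, and of Hebey's proof for $\a=1$. The idea is to exploit the strict gain $q/p>1$: instead of $\|\sum f_i\|_{q/p}\le\sum\|f_i\|_{q/p}$ we will keep the full sum and apply the local inequality directly to the $\a$-powers, using $\|u\|_q^\a\le(\sum_i\|\eta_iu\|_q^p)^{\a/p}$. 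For the gradient term we would use the exact pointwise expansion of $|\nabla(\eta_iu)|^p$ with $\sum\eta_i^p=1$ and $\sum_i\eta_i^{p-1}|\nabla\eta_i|\le C$. The cross terms are of the form $\int|\nabla u|^{p-1}|u|$, and they are absorbed into $B\|u\|_p^\a$ by Young's inequality, combined with the dichotomy ``$\|\nabla u\|_p$ large versus $\|u\|_p$'' used in \cite{mnq} for the case $\a<1$. Concretely, we would split according to whether $\|u\|_p\le\d\|\nabla u\|_p$ or not. In the first regime the cross terms cost only a factor $1+O(\d)$; this factor is then recovered by taking $B$ large in the original local inequality (the rescaling $\tau\to$ smaller gives room). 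In the second regime the inequality is trivial with a large $B$, by the Euclidean-type Sobolev bound with any constant. This is the step we expect to require the most care.
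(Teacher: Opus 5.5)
As written, your proposal does not prove the statement. The part you carry out gives $\|u\|_q^p\le(1+\epsilon)^2A^{p/\alpha}\|\nabla u\|_p^p+C\|u\|_p^p$, and the repair you sketch fails.

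\begin{itemize}
\item A factor $1+O(\delta)$ on $A\|\nabla u\|_p^\alpha$ cannot be paid for by enlarging $B$. In the regime $\|u\|_p\le\delta\|\nabla u\|_p$, the term $\|u\|_p^\alpha$ can be arbitrarily small compared with $\delta\|\nabla u\|_p^\alpha$.
\item $A$, $B$, $\tau$ all come with the hypothesis, and shrinking $\tau$ does not improve $A$.
\item Your diagnosis of the second obstacle is off. Going from exponent $p$ down to $\alpha\le p$ costs nothing, since $(x+y)^{\alpha/p}\le x^{\alpha/p}+y^{\alpha/p}$.
\end{itemize}

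The real losses come from your two $(1+\epsilon)$-splittings, i.e.\ from controlling everything at exponent $p$. There, cross terms of size $\|\nabla u\|_p^{p-1}\|u\|_p$ are not dominated by $\|u\|_p^p$. At exponent $\alpha\le1$ the analogous error $\|\nabla u\|_p^{\alpha-1}\|u\|_p$ is at most $\|u\|_p^\alpha$ whenever $\|u\|_p\le\|\nabla u\|_p$.

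In fact your covering can be made lossless for $\alpha\le1$. Minkowski in $\ell^{p/\alpha}$, applied to $\|u\|_q^p\le\sum_i\big(A\|\nabla(\eta_iu)\|_p^\alpha+B\|\eta_iu\|_p^\alpha\big)^{p/\alpha}$, gives $\|u\|_q^\alpha\le A\big(\sum_i\|\nabla(\eta_iu)\|_p^p\big)^{\alpha/p}+B\|u\|_p^\alpha$. Minkowski in $L^p(\ell^p)$ gives $\big(\sum_i\|\nabla(\eta_iu)\|_p^p\big)^{1/p}\le\|\nabla u\|_p+C\|u\|_p$, and $(x+y)^\alpha\le x^\alpha+y^\alpha$ concludes. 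However, any covering argument of this kind needs $\sup_x\vol(B(x,2\rho))\le\tau$ and bounded overlap, i.e.\ Bishop--Gromov comparison from a Ricci lower bound. That is not a hypothesis of Theorem~\ref{loc-glob1}, which assumes only the small-volume inequality on $M$.

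The missing idea is the paper's truncation at the level $u^*(\tau)$, which uses no cutoffs and no geometry. After reducing to $u\ge0$, write
\[
u=\big(u-u^*(\tau)\big)^++u^*(\tau)\chi_{\{u\ge u^*(\tau)\}}+u\chi_{\{u<u^*(\tau)\}}.
\]
\begin{itemize}
\item The first piece belongs to $W_0^{1,p}(\{u>u^*(\tau)\})$, a set of measure at most $\tau$, and its gradient is pointwise dominated by $|\nabla u|$. So the small-volume inequality bounds the $\alpha$-th power of its $L^q$ norm by $A\|\nabla u\|_p^\alpha+B\|u\|_p^\alpha$, with $A$ untouched.
\item The middle piece has $L^q$ norm at most $\tau^{-1/n}\|u\|_p$, because $\mu(\{u\ge u^*(\tau)\})\ge\tau$ and $q>p$.
\item The last piece has the same bound: compare $\int_\tau^\infty(u^*)^q\,dt$ on the blocks $[k\tau,(k+1)\tau]$ with $\int_0^\infty(u^*)^p\,dt$ and use $\ell^q\subset\ell^p$.
\end{itemize}
Minkowski's inequality and subadditivity of $t\mapsto t^\alpha$ then give the global inequality with constants $A$ and $B+2\tau^{-\alpha/n}$. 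This last step, like the Minkowski repair above, uses $\alpha\le1$, which is the range needed for Theorem~\ref{main1}.
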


\ni{\bf Proof.}  To prove the global inequalities it's enough to assume  $u\in C_c^\infty(M)$. If  $u=u^+-u^-$, then $u^+=u\chi_{\{u>0\}}\in W_0^{1,p}(\{u>0\})$ and $\nabla u^+=\nabla u\chi_{\{u>0\}}$, and similar identities hold for $u^{-}$. Using the elementary inequality 
\be \bigg(\sum_{j=1}^N a_j^q\bigg)^{\frac 1q}\le \bigg(\sum_{j=1}^N a_j^p\bigg)^{\frac 1p},\qquad q>p,\qquad a_j\ge0,\qquad N\in\N\label{ineq}\ee
it is easy to see that without loss of generality we can assume  $u\ge0.$ Indeed, if that is the case then
\be\ba \|u\|_q^a&=\big(\|u^+\|_q^q+\|u^-\|_q^q\big)^{\frac aq}\le\Big(\big(A\|\nabla u^+\|_p^a+B\|u^+\|_p^a\big)^{\frac qa}+\big(A\|\nabla u^-\|_p^a+B\|u^-\|_p^a\big)^{\frac qa}\Big)^{\frac aq}\cr&
\le\Big(A^{\frac qa}\|\nabla u^+\|_p^q+A^{\frac qa}\|\nabla u^-\|_p^q\Big)^{\frac aq}+\Big(B^{\frac qa}\| u^+\|_p^q+B^{\frac qa}\|u^-\|_p^q\Big)^{\frac aq}\cr&
\le \Big(A^{\frac pa}\|\nabla u^+\|_p^p+A^{\frac pa}\|\nabla u^-\|_p^p\Big)^{\frac ap}+\Big(B^{\frac pa}\| u^+\|_p^p+B^{\frac pa}\|u^-\|_p^p\Big)^{\frac ap}=A\|\nabla u\|_p^a+B\|u\|_p^a.
\ea\ee
Assume then $u\ge0$, and that the inequality  holds on $W_0^{1,p}(\Omega)$, for any open $\Omega$ with $\vol(\Omega)\le \tau.$ If $\mu(\{u>0\})\le \tau$ there is nothing to prove, so we can assume $\mu(\{u>0\}>\tau$, i.e. $u^*(\tau)>0$. Write
\be u=\big(u-u^*(\tau)\big)^++u^*(\tau)\chi_{\{u\ge u^*(\tau)\}}+u\chi_{\{u<u^*(\tau)\}}.\ee
and using Minkowski's inequality we get
\be\ba \|u\|_q^a&\le \bigg( \|\big(u-u^*(\tau)\big)^+\|_q+u^*(\tau)\mu\big(\{u\ge u^*(\tau)\}\big)^{\frac1q}+\bigg(\int_{\{u<u^*(\tau)\}} u^q d\mu\bigg)^{\frac1q}\bigg)^a\cr
&\le \|\big(u-u^*(\tau)\big)^+\|_q^a+\bigg(u^*(\tau)\mu\big(\{u\ge u^*(\tau)\}\big)^{\frac1q}\bigg)^a+\bigg(\int_{\{u<u^*(\tau)\}} u^q d\mu\bigg)^{\frac aq}.
\ea\ee

From \eqref{tau} and the fact that $q>p$ we have
\be\ba u^*(\tau)\mu\big(\{u\ge u^*(\tau)\}\big)^{\frac1q}&\le \tau^{\frac1q-\frac1p}u^*(\tau)\mu\big(\{u\ge u^*(\tau)\}\big)^{\frac1p}\cr&=\tau^{-\frac1n}\bigg(\int_{\{u\ge u^*(\tau)\}} u^*(\tau)^p d\mu\bigg)^{\frac1p}\le \tau^{-\frac1n}\|u\|_p.\ea\ee

From \eqref{set1} and \eqref{int1} we obtain
\be\int_{\{u<u^*(\tau)\}} u^q d\mu=\int_M u^q d\mu -\int_{\{u\ge u^*(\tau)\}} u^q d\mu\le \int_\tau^\infty \big(u^*(t)\big)^qdt\ee
and  therefore, using \eqref{ineq}
\be\ba\bigg(\int_{\{u<u^*(\tau)\}} u^q d\mu\bigg)^{\frac1q}&\le \bigg(\int_\tau^\infty \big(u^*(t)\big)^qdt\bigg)^{\frac1q}=
\bigg(\sum_{k=1}^\infty\int_{k\tau}^{(k+1)\tau} \big(u^*(t)\big)^qdt\bigg)^{\frac1q}\cr&\le
\bigg(\sum_{k=1}^\infty\tau\big(u^*(k\tau)\big)^q\bigg)^{\frac1q}\le \tau^{\frac1q-\frac1p}\bigg(\sum_{k=1}^\infty\tau\big(u^*(k\tau)\big)^p\bigg)^{\frac1p}\cr&\le  \tau^{\frac1q-\frac1p}\bigg(\sum_{k=1}^\infty\int_{(k-1)\tau}^{k\tau} \big(u^*(t)\big)^pdt\bigg)^{\frac1p}=
\tau^{-\frac1n}\|u\|_p.
\ea\ee
Since $\big(u-u^*(\tau)\big)^+\in W_0^{1,p}(\{u-u^*(\tau)>0\})$ and $\mu\big(\{u-u^*(\tau)>0\}\big)\le \tau$, under the given assumption of  Sobolev inequality for small volumes we obtain
\be\ba \|u\|_q^a&\le A\|\nabla \big(u-u^*(\tau)\big)^+\|_p^a+B\|\big(u-u^*(\tau)\big)^+\|_p^a+2\tau^{-\frac an}\|u\|_p^a
\cr&\le A\|\nabla u\|_p^a+(B+2\tau^{-\frac an})\|u\|_p^a.\ea
\ee
\rightline\qed

\smallskip
\begin{theorem}\label{loc-glob2} Given $k=1,2$ and $\gam>0$,   if the Moser-Trudinger inequality with constant $\gam$ holds for small volumes on $M$, then it holds globally on $M$.

\end{theorem}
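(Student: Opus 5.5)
The plan is the truncation argument of Ruf, Lam--Lu and \cite{fmq-ann}: cut $u$ at a level so that the part above the level lives on a set of volume at most $\tau$, where the small-volume inequality applies. The part below the level is handled by elementary $L^{n/k}$ estimates. Fix $\kappa>0$ and $u$ with $\kappa\|u\|_{n/k}^{n/k}+\|\nabla^k u\|_{n/k}^{n/k}\le1$. By density, for $k=1$ we may take $u\in C_c^\infty(M)$, and by passing to $|u|$ also $u\ge0$. By \eqref{expreg} it suffices to bound $\int_{\{|u|\ge1\}}e^{\gamma|u|^{\frac n{n-k}}}d\mu$.

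\textbf{Choice of level and volume bound.} Choose a level $L>0$ depending only on $\kappa$ and $\tau$, for instance with $\mu(\{|u|>L\})\le \kappa^{-1}L^{-n/k}\le\tau$ by Chebyshev. On $\{|u|\le L\}$ the integrand $\exp_N(\gamma|u|^{\frac n{n-k}})$ with $N=\lceil\frac{n-k}k-1\rceil$ is bounded by $C_L|u|^{n/k}$. Hence that part contributes at most $C_L\|u\|_{n/k}^{n/k}\le C_L/\kappa$.

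\textbf{The case $k=1$.} On $\Omega=\{u>L\}$ set $w=u-L\in W_0^{1,n}(\Omega)$, with $\|\nabla w\|_n^n\le 1-\kappa\|u\|_n^n=:1-\epsilon$. The elementary inequality
\[
u^{\frac n{n-1}}\le (1+\delta)w^{\frac n{n-1}}+C_\delta L^{\frac n{n-1}}
\]
holds with $\delta$ chosen so that $(1+\delta)(1-\epsilon)^{\frac1{n-1}}\le1$. Here $\epsilon\ge\kappa L^n\mu(\Omega)$. Putting $\tilde w=w/\|\nabla w\|_n$, we get $\gamma(1+\delta)w^{\frac n{n-1}}\le\gamma \tilde w^{\frac n{n-1}}$. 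Therefore
\[
\int_\Omega e^{\gamma u^{\frac n{n-1}}}\le e^{\gamma C_\delta L^{\frac n{n-1}}}\int_\Omega e^{\gamma\tilde w^{\frac n{n-1}}}\le e^{C}\big(\mu(\Omega)+C\big),
\]
by the small-volume inequality applied to $\tilde w$ on $\Omega$.

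\textbf{Main obstacle: making $C_\delta$ uniform.} The difficulty is that $\delta$ depends on $\epsilon$, and $C_\delta\sim\delta^{-1/(n-1)}$ blows up as $\epsilon\to0$. I would pick $\delta\approx\epsilon/(n-1)$. Then $C_\delta L^{\frac n{n-1}}\lesssim \epsilon^{-\frac1{n-1}}L^{\frac n{n-1}}$, and since $\epsilon\ge\kappa L^n\mu(\Omega)$, one must bound $\mu(\Omega)e^{c\epsilon^{-1/(n-1)}L^{n/(n-1)}}$. This works by choosing $L$ depending on $u$, namely $L=u^*(\tau)$ as in Theorem~\ref{loc-glob1}, or via the Lam--Lu refinement with the level adapted to $\|u\|_n$. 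Concretely, I would apply the Lam--Lu choice $L$ such that $\mu(\Omega)\le\tau$, and use $L^n\mu(\Omega)\le\|u\|_n^n\le\epsilon/\kappa$, which gives $\epsilon^{-1/(n-1)}L^{n/(n-1)}\le (\kappa\mu(\Omega))^{-1/(n-1)}$. One then checks $\mu(\Omega)e^{c(\kappa\mu(\Omega))^{-1/(n-1)}}$ by splitting into the regimes $\mu(\Omega)$ small versus bounded below. If this crude splitting fails, I would refine the constant in the binomial-type inequality, which is where most care is needed.

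\textbf{The case $k=2$.} The same argument applies with $u\in W^{2,n/2}(M)$, cut at $L=u^*(\tau)$. Set $w=(u-L)^+$ on $\Omega=\{u>L\}$. As noted in the introduction, after possibly replacing $u$ by the solution of $-\Delta\tilde u=|\Delta u|$, which dominates $|u|$ by the maximum principle, one has $w\in W_0^{1,2}(\Omega)\cap W^{2,\frac n2}(\Omega)$ and $-\Delta w=-\Delta u$ on $\Omega$. Hence $\|\Delta w\|_{n/2}^{n/2}\le\|\Delta u\|_{n/2}^{n/2}\le1-\kappa\|u\|_{n/2}^{n/2}$. The rest repeats the $k=1$ computation with exponent $\frac n{n-2}$.
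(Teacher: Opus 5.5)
Your overall strategy is the paper's: cut at $L=u^*(\tau)$, bound $u\le w+L$ with a $u$-dependent convexity parameter, normalize $w$, and apply the small-volume inequality. The gap is in the one step that makes the argument work, which you flagged yourself. The factor $e^{\gamma C_\delta L^{n/(n-1)}}$, with $C_\delta\approx\epsilon^{-1/(n-1)}$, must be bounded \emph{by itself}, uniformly in $u$. The small-volume inequality (after $e^t\le C\exp_N(t)+C$) gives only $\int_\Omega e^{\gamma\tilde w^{n/(n-1)}}d\mu\le C$, a constant that does not shrink with $\mu(\Omega)$. So the quantity to control is not $\mu(\Omega)e^{c\epsilon^{-1/(n-1)}L^{n/(n-1)}}$. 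Your estimate $L^n\mu(\{u>L\})\le\|u\|_n^n$ only yields $\epsilon^{-1/(n-1)}L^{n/(n-1)}\le(\kappa\mu(\Omega))^{-1/(n-1)}$, which is unbounded as $\mu(\Omega)\to0$. Splitting into regimes cannot rescue this, since even $\mu e^{c\mu^{-1/(n-1)}}\to\infty$ as $\mu\to0$.

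The missing observation is the \emph{lower} bound in \eqref{tau1}: for $L=u^*(\tau)>0$ one has $\mu(\{|u|\ge L\})\ge\tau$. Hence $L^{n}\tau\le\|u\|_n^n\le\epsilon/\kappa$ with your $\epsilon=\kappa\|u\|_n^n$, so $\epsilon^{-1/(n-1)}L^{n/(n-1)}\le(\kappa\tau)^{-1/(n-1)}$ uniformly. The same estimate gives $L\le(\kappa\tau)^{-1/n}$, which handles $\{1\le|u|\le L\}$. This is exactly the paper's proof. The paper places the weight $\epsilon^{-k/(n-k)}$ on the truncated part with $\epsilon=\|\nabla^k u\|_{n/k}^{n/k}$. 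It then uses $u^*(\tau)^{n/k}\le(\kappa\tau)^{-1}(1-\epsilon)$ to cancel $(1-\epsilon)^{-k/(n-k)}$. Your placement of the weights is equivalent once this bound is used, and no case analysis is needed.

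For $k=2$, drop the replacement of $u$ by the solution of $-\Delta\tilde u=|\Delta u|$. On a noncompact, possibly parabolic $M$ such a solution need not exist. Even on $\R^n$ with $n=3,4$ it is not in $L^{n/2}$, since $\tilde u\sim c|x|^{2-n}$. In any case $\|\tilde u\|_{n/2}\ge\|u\|_{n/2}$, so \eqref{ruf} fails for $\tilde u$, and you lose the bound on the level that the argument above needs.

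The replacement is also unnecessary. Since you cannot pass to $|u|$ when $k=2$, truncate symmetrically as the paper does. With $L=u^*(\tau)>0$, the set $\Omega_\tau=\{|u|>L\}$ is the disjoint union of the open sets $\{u>L\}$ and $\{u<-L\}$. The function $v=(u^+-L)^+-(u^--L)^+$ equals $u\mp L$ on them. For $u\in C_c^\infty(M)$ this directly gives $v\in W_0^{1,2}(\Omega_\tau)\cap W^{2,\frac n2}(\Omega_\tau)$, $\Delta v=\Delta u$ on $\Omega_\tau$, $\mu(\Omega_\tau)\le\tau$, and $|u|\le|v|+L$. This is why the small-volume inequality for $k=2$ is posed on $W_0^{1,2}(\Omega)\cap W^{2,\frac n2}(\Omega)$. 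With $w=(u-L)^+$ alone you would miss the set $\{u<-L\}$.
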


\ni {\bf Proof.}
It is enough to assume that $u\in C_c^\infty(M)$, satisfying \eqref{ruf}, i.e.
\be\kappa\|u\|_{n/k}^{n/k}+\|\nabla^k u\|_{n/k}^{n/k}\le 1.\ee
 
 It's easy to check that for any such $u$ we have 
 \be \bigg| \int_M \exp_{\llceil\frac{n}{k}-2\rrceil}{\left(\gam |u|^{\frac{n}{n-k}}\right)}d\mu- \int_{\{|u|\ge1\}}e^{\sgam  |u|^{ \frac n{n-k}}}d \mu\bigg|\le e^{\sgam}\|u\|_{n/k}^{n/k}\label{expreg}\ee
(see e.g. Lemma 9 in \cite{fm}) so that  it's enough to prove that 
\be \int_{\{|u|\ge1\}}e^{\sgam  |u|^{\frac n{n-k}}}d\mu\le C.\label{MT1}\ee
Note that under our hypothesis $\mu(\{|u|\ge1\})\le \|u\|_{n/k}^{n/k}\le 1$.

Note now that that 
\be \mu( \{x:|u(x)|>u^*(\tau)\})\le \tau\le \mu(\{x: |u(x)|\ge u^*(\tau)\})\label{tau1}\ee
and that $u^*(\tau)>0$ if and only if $\mu(\{|u|>0\})>\tau$.

If $\mu(\{|u|>0\})\le \tau$ then \eqref{MTL} follows from \eqref{MT1} with $\Omega=\{|u|>0\})$ and the  given  assumptions.

 Let us assume that $\mu(\{|u|>0\})>\tau$,  and let 
\be v=(u^+-u^*(\tau))^+-(u^--u^*(\tau))^+=\bc 

u-u^*(\tau) & {\text {if}} \; u^+>u^*(\tau)\cr
 u+u^*(\tau) & {\text {if}} \; u^-> u^*(\tau)\cr
0& {\text {if}} \; u^+\le u^*(\tau),\;  u^-\le u^*(\tau).\ec\ee
Letting
\be\Omega_\tau=\{|u|>u^*(\tau)\}\ee
then we have $v\in W_0^{1,n}(\Omega_\tau)$ for $k=1$, and $v\in W_0^{1,2}(\Omega_\tau)\cap W^{2,\frac n2}(\Omega_\tau)$, for $k=2$. Moreover,   $\mu(\Omega_\tau)\le \tau$, and
\be \|\nabla^k v\|_{ n/k}^{n/k}=\int_{\Omega_\tau}|\nabla^k u|^{\frac nk}d\mu\le \|\nabla^k u\|_{n/k}^{n/k}\le 1-\kappa \|u\|_\nk^\nk,\label {v100}\ee
\be |u|\le|v|+u^*(\tau).\ee
Using the inequality $(a+b)^p\le a^p \e^{1-p}+b^p (1-\e)^{1-p}$ ($a,b\ge0,p\ge1, 0<\e<1$) we then have
\be e^{\sgam  u^\nnk}\le  e^{\sgam  v^\nnk \e^{-\knk} }\,e^{\sgam  u^*(\tau)^\nnk  (1-\e)^{-\knk}}.\label{exp}
 \ee

From \eqref{tau1}
 we obtain 
 \be \ba0<u^*(\tau)^{\frac nk}&\le u^*(\tau)^{\frac nk}\;\frac{\mu(\{|u|\ge u^*(\tau)\})}\tau \le\frac1\tau\; \|u\|_\nk^\nk \le\frac1{\kappa\tau}\;( 1- \|\nabla^ku\|_\nk^\nk),\label{u3}\ea\ee
 which implies $\|\nabla^k u\|_\nk<1$.

 If  $\|\nabla^k u\|_{n/k}=0$, then $u=0$, so \eqref{MT1} follows.

If $\|\nabla^k u\|_\nk>0$, let $\e=\|\nabla^k u\|_\nk^\nk$, in which case 
using the inequality for small volumes  and \eqref{v100}, \eqref{u3}
\be \int_{\Omega_\tau}e^{\sgam  v^\nnk \e^{-\knk} }\le C,\qquad e^{\sgam  u^*(\tau)^\nnk  (1-\e)^{-\knk}}\le e^{\sgam (\kappa\tau)^{-\knk}}\label{u2}.\ee
Then  \eqref{MT1} follows, since $\{u\ge1\}\subseteq \Omega_\tau\cup \{1\le u\le u^*(\tau)\}\subseteq \Omega_\tau\cup \{1\le u\le(\kappa\tau)^{-k/n}\}$, and    $\mu(\{u\ge1\})\le1.$

\rightline\qed


\vskip1em


\section{Proofs of Theorems \ref{main1} and \ref{main2}}\label{s7}

\bigskip
The proofs of Theorems \ref{main1} and \ref{main2}  follow immediately from Theorems \ref{expansion}, \ref{smallprofile-sobolev}, \ref{smallprofile-MT}, \ref{loc-glob1}, \ref{loc-glob2}. 

\qed

\vskip1em
\begin{table}[h]

\setlength{\tabcolsep}{0pt} 
\begin{tabular}{@{}p{0.5\linewidth}p{0.5\linewidth}@{}}
\textbf{Carlo Morpurgo} & \textbf{Liuyu Qin}  \\
Department of Mathematics & Department of Mathematics and Statistics\\
University of Missouri & Hunan University of Finance and Economics\\
Columbia, Missouri 65211 & Changsha, Hunan  \\
USA & China\\
\texttt{morpurgoc@umsystem.edu} & \texttt{Liuyu\_Qin@outlook.com} \\

\end{tabular}
\end{table}
\end{document}